\newtheorem{theorem}{Theorem}[section]
\newtheorem{definition}[theorem]{Definition}
\newtheorem{proposition}[theorem]{Proposition}
\newtheorem{lemma}[theorem]{Lemma}
\begin{document}

\title{Classification results for easy quantum groups}

\author{T. Banica}
\address{T.B.: Department of Mathematics, Paul Sabatier University, 118 route de Narbonne, 31062 Toulouse, France. {\tt banica@math.ups-tlse.fr}}

\author{S. Curran}
\address{S.C.: Department of Mathematics, University of California, Berkeley, CA 94720, USA. {\tt curransr@math.berkeley.edu}}

\author{R. Speicher}
\address{R.S.: Department of Mathematics and Statistics, Queen's University, Jeffery Hall, Kingston, Ontario K7L 3N6, Canada. {\tt speicher@mast.queensu.ca}}

\subjclass[2000]{46L65 (20F55, 46L54)}
\keywords{Quantum group, Noncrossing partition}

\begin{abstract}
We study the orthogonal quantum groups satisfying the ``easiness'' assumption axiomatized in our previous paper, with the construction of some new examples, and with some partial classification results. The conjectural conclusion is that the easy quantum groups consist of the previously known 14 examples, plus of an hypothetical multi-parameter ``hyperoctahedral series'', related to the complex reflection groups $H_n^s=\mathbb Z_s\wr S_n$. We discuss as well the general structure, and the computation of asymptotic laws of characters, for the new quantum groups that we construct.
\end{abstract}

\maketitle

\section*{Introduction}

One of the strengths of the theory of compact Lie groups comes from the fact that these objects can be classified. It is indeed extremely useful to know that the symmetry group of a classical or a quantum mechanical system falls into an advanced classification machinery, and applications of this method abound in mathematics and physics.

The quantum groups were introduced by Drinfeld \cite{dri} and Jimbo \cite{jim}, in order to deal with quite complicated systems, basically coming from number theory or quantum mechanics, whose symmetry group is not ``classical''. There are now available several extensions and generalizations of the Drinfeld-Jimbo construction, all of them more or less motivated by the same philosophy. A brief account of the whole story, focusing on constructions which are of interest for the present considerations, is as follows:

\begin{enumerate}
\item Let $G\subset U_n$ be a compact group, and consider the algebra $A=C(G)$. The matrix coordinates $u_{ij}\in A$ satisfy the commutation relations $ab=ba$. The original idea of Drinfeld-Jimbo, further processed by Woronowicz in \cite{wo1}, was that these commutation relations are in fact the $q=1$ particular case of the $q$-commutation relations $ab=qba$, where $q>0$ is a parameter. The algebra $A$ itself appears then as the $q=1$ particular case of a certain algebra $A_q$. While $A_q$ is no longer commutative, we can formally write $A=C(G_q)$, where $G_q$ is a quantum group.

\item An interesting modification of the above construction was proposed by Wang in \cite{wa1}, \cite{wa2}. His idea was to construct a new algebra $A^+$, by somehow ``removing'' the commutation relations $ab=ba$. Once again we can formally write $A^+=C(G^+)$, where $G^+$ is a so-called free quantum group. This construction, while originally coming only with a vague motivation from mathematical physics, was intensively studied in the last 15 years. Among the partial conclusions that we have so far is the fact that the combinatorics of $G^+$ is definitely interesting, and should have something to do with physics. In other words, $G^+$, while being by definition a quite abstract object, is probably the symmetry group of ``something'' very concrete. 

\item Several variations of Wang's construction appeared in the recent years, notably in connection with the construction and classification of intermediate quantum groups $G\subset G^*\subset G^+$. For instance in the case $G=O_n$, it was shown in our previous paper \cite{bsp} that the commutation relations $ab=ba$ can be succesfully replaced with the so-called half-commutation relations $abc=cba$, in order to obtain a new quantum group, $O_n^*$. Some other commutation-type relations, for instance of type $(ab)^s=(ba)^s$, will be described in the present paper.

\item As a conclusion, the general idea that tends to emerge from the above considerations is that a ``very large class'' of compact quantum groups should appear in the following way: (a) start with a compact Lie group $G\subset U_n$, (b) build a noncommutative version of $C(G)$, by replacing the commutation relations $ab=ba$ by some weaker relations, (c) deform this latter algebra, by using a positive parameter $q>0$, or more generally a whole family of such positive parameters.
\end{enumerate}

This was for the motivating story. In practice, now, while the construction (1) is now basically understood, thanks to about 25 years of efforts of many mathematicians, (2) is just at the very beginning of an axiomatization, (3) is still at the level of pioneering examples, and (4) is just a dream. As for the possible applications to physics, basically nothing is known so far, but the hope for such an application increases, as more and more interesting formulae emerge from the study of compact quantum groups.

The present paper is a continuation of our previous work \cite{bsp}. We will advance on the classification work started there, for the easy quantum groups in the orthogonal case, and we will present a detailed study of the new quantum groups that we find.

The objects of interest will be the compact quantum groups satisfying $S_n\subset G\subset O_n^+$. Here $O_n^+$ is the free analogue of the orthogonal group, constructed by Wang in \cite{wa1}, and for the compact quantum groups we use Woronowicz's formalism in \cite{wo1}.

As in \cite{bsp} we restrict attention to the ``easy'' case. The easiness assumption, essential to our considerations, roughly states that the tensor category of $G$ should be spanned by certain partitions, coming from the tensor category of $S_n$. This might look of course like a quite technical condition. The point, however, is that imposing this technical condition is the ``price to pay'' for restricting attention to the ``truly easy'' case.

As explained in \cite{bsp}, our motivating belief is the fact that ``any result which holds for $S_n,O_n$ should have a suitable extension to all easy quantum groups''. This is of course a quite vague statement, whose target is actually formed by some results at the borderline between representation theory and probability. In this paper, however, we will rather focus on the classification problem. The further development of our ``$S_n,O_n$ philosophy'', leading perhaps to some interesting applications, will be left to a number of forthcoming papers. We refer to the final section below for more comments in this direction.

So, for the purposes of the present work, the easy quantum groups can be just thought of as being a ``carefully chosen collection'' of basic objects of the theory.

There are 14 natural examples of easy quantum groups, all but one described in \cite{bsp}, and the remaining one to be studied in detail in this paper. In addition, there are at least two infinite series, once again to be introduced in this paper. The list is as follows:
\begin{enumerate}
\item Groups: $O_n,S_n,H_n,B_n,S_n',B_n'$.

\item Free versions: $O_n^+,S_n^+,H_n^+,B_n^+,S_n'^+,B_n'^+$.

\item Half-liberations: $O_n^*,H_n^*$.

\item Hyperoctahedral series: $H_n^{(s)},H_n^{[s]}$.
\end{enumerate}

This list doesn't cover all the easy quantum groups, but we will present here some partial classification results, with the conjectural conclusion that the full list should consist of (1,2,3), and of a multi-parameter series unifying (4). We will also investigate the new quantum groups that we find, by using various techniques from \cite{bb+}, \cite{bbc}, \cite{bsp}, \cite{bv1}, \cite{bv2}.

As already mentioned, we expect the above list to be a useful, fundamental ``input'' for a number of representation theory and probability considerations. We also expect that the new quantum groups that we find can lead in this way to some interesting applications. We have several projects here, to be discussed at the end of the paper.

The paper is organized as follows. In 1-2 we recall our previous results from \cite{bsp}, and we study the quantum group $H_n^*$, by using techniques from \cite{bsp}, \cite{bv2}. In 3-4 we introduce the one-parameter series, and we study their basic properties, by using techniques from \cite{bb+}, \cite{bv1}. In 5-6 we state and prove the classification results, by making a heavy use of the ``capping'' method in \cite{bsp}, \cite{bv2}. The final sections, 7-8, contain the computation of asymptotic laws of characters, and some concluding remarks.

\subsection*{Acknowledgements}

T.B. would like to thank Queen's University, where part of this work was done. The work of T.B. was supported by the ANR grants ``Galoisint'' and ``Granma'', and the work of R.S. was supported by a Discovery grant from NSERC.

\section*{0. Notation}

As in our previous work \cite{bsp}, the basic object under consideration will be a compact quantum group $G$. The concrete examples of such quantum groups include the usual compact groups $G$, and, to some extent, the duals of discrete groups $\widehat{\Gamma}$. In the general case, however, $G$ is just a fictional object, which exists only via its associated Hopf $C^*$-algebra of ``complex continuous functions'', denoted $A=C(G)$.

For simplicity of notation, we will rather use the quantum group $G$ instead of the Hopf algebra $A$. For instance $\int_Gu_{i_1j_1}\ldots u_{i_kj_k}\,du$ will denote the complex number obtained by applying the Haar functional $\varphi:A\to\mathbb C$ to the well-defined quantity $u_{i_1j_1}\ldots u_{i_kj_k}\in A$.

We will use the quantum group notation depending on the setting: in case where this can lead to confusion, we will rather switch back to the Hopf algebra notation.

\section{Easy quantum groups}

In this section we briefly recall some notions and results from our previous paper \cite{bsp}. This material is here mostly for fixing the formalism and the notations.

Consider first a compact group satisfying $S_n\subset G\subset O_n$. That is, $G\subset O_n$ is a closed subgroup, containing the subgroup $S_n\subset O_n$ formed by the permutation matrices.

Let $u,v$ be the fundamental representations of $G,S_n$. By functoriality we have an inclusion $Hom(u^{\otimes k},u^{\otimes l})\subset Hom(v^{\otimes k},v^{\otimes l})$, for any $k,l$. On the other hand, the Hom-spaces for $v$ are well-known: they are spanned by certain explicit operators $T_p$, with $p$ belonging to $P(k,l)$, the set of partitions between $k$ points and $l$ points. More precisely, if $e_1,\ldots,e_n$ denotes the standard basis of $\mathbb C^n$, the formula of $T_p$ is as follows:
$$T_p(e_{i_1}\otimes\ldots\otimes e_{i_k})=\sum_{j_1\ldots j_l}\delta_p\begin{pmatrix}i_1&\ldots&i_k\\ j_1&\ldots&j_l\end{pmatrix}e_{j_1}\otimes\ldots\otimes e_{j_l}$$

Here the $\delta$ symbol on the right is 0 or 1, depending on whether the indices ``fit'' or not, i.e. $\delta=1$ if all blocks of $p$ contains equal indices, and $\delta=0$ if not.

We conclude from the above discussion that the space $Hom(u^{\otimes k},u^{\otimes l})$ consists of certain linear combinations of operators of type $T_p$, with $p\in P(k,l)$.

We call $G$ ``easy'' if its tensor category is spanned by partitions.

\begin{definition}
A compact group $S_n\subset G\subset O_n$ is called easy if there exist sets $D(k,l)\subset P(k,l)$ such that $Hom(u^{\otimes k},u^{\otimes l})=span(T_p|p\in D(k,l))$, for any $k,l$.
\end{definition}

It follows from the axioms of tensor categories that the collection of sets $D(k,l)$ must be closed under certain categorical operations, notably the vertical and horizontal concatenation, and the upside-down  turning. The corresponding algebraic structure formed by the sets $D(k,l)$, axiomatized in \cite{bsp}, will be called ``category of crossing partitions''.

We recall that a matrix is called monomial if it has exactly one nonzero entry on each row and each column. The basic examples are the permutation matrices.

\begin{definition}
We consider the following groups:
\begin{enumerate}
\item $O_n$: the orthogonal group.

\item $S_n$: the symmetric group, formed by the permutation matrices.

\item $H_n$: the hyperoctahedral group, formed by monomial matrices with $\pm 1$ entries.

\item $B_n$: the bistochastic group, formed by orthogonal matrices with sum $1$ on each row.

\item $S_n'=\mathbb Z_2\times S_n$: the group formed by the permutation matrices times $\pm 1$.

\item $B_n'=\mathbb Z_2\times B_n$: the group formed by the bistochastic matrices times $\pm 1$.
\end{enumerate}
\end{definition}

It follows from definitions that all the above 6 groups satisfy $S_n\subset G\subset O_n$. Observe that among all these groups, only $O_n,S_n$ are of ``irreducible'' nature, because we have canonical isomorphisms $H_n=\mathbb Z_2\wr S_n$ and $B_n\simeq O_{n-1}$. See \cite{bsp}.

The partitions in $P(k,l)$ with $k+l$ even are called ``even''.

\begin{theorem}
There are exactly $6$ easy groups, namely the above ones. The corresponding categories of crossing partitions are as follows:
\begin{enumerate}
\item $P_o$: all pairings.

\item $P_s$: all partitions.

\item $P_h$: partitions with blocks of even size.

\item $P_b$: singletons and pairings.

\item $P_{s'}$: all partitions (even part).

\item $P_{b'}$: singletons and pairings (even part).
\end{enumerate}
\end{theorem}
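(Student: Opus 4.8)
The plan is to classify the easy groups $S_n\subset G\subset O_n$ by classifying the corresponding categories of crossing partitions $D=(D(k,l))$, using the bijective correspondence from \cite{bsp} between easy groups and such categories. So I would start by listing the structural constraints: each $D(k,l)\subset P(k,l)$ is stable under vertical concatenation (composition), horizontal concatenation (tensor product), and the upside-down turning (taking adjoints / rotating legs around), and it must contain the identity pairing and the duality pairing $\cap\in P(0,2)$ that witnesses self-duality of the fundamental representation of $O_n$. The extreme cases are immediate: $D=P_o$ (all pairings) corresponds to $O_n$, since the Brauer-type relations spanning $\mathrm{Hom}$-spaces for $O_n$ are exactly the pairings, and $D=P_s$ (all partitions) corresponds to $S_n$, since the extra generator needed is the ``fork'' partition in $P(1,2)$ that encodes the relation $u_{ij}^2=u_{ij}$ making the entries projections. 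The containment $S_n\subset G\subset O_n$ forces $P_o\subseteq D\subseteq P_s$ for every easy $G$ in this range, so the whole problem is: which categories of partitions lie between the category of all pairings and the category of all partitions?

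Next I would carry out the combinatorial heart of the argument: enumerating the intermediate categories. The key observation is that a category strictly larger than $P_o$ must contain some partition with a block of size $\neq 2$. Using the categorical operations --- in particular capping a large block with $\cap$'s and composing --- one can reduce any such partition to one of a few ``elementary'' generators: the singleton (an element of $P(0,1)$), the fork (element of $P(1,2)$), the double singleton / ``double leg'' partition, or the $4$-block partition in $P(0,4)$. So the strategy is a case analysis on which of these generators a given category $D$ contains. Concretely: (i) if $D$ contains a block of odd size, then by capping one shows $D$ contains a singleton, hence $D\supseteq P_b$; (ii) if $D$ contains the fork, one shows $D=P_s$ or $D=P_{s'}$ depending on whether $D$ contains odd partitions; (iii) if $D$ contains only even blocks but has a block of size $\geq 4$, one shows $D\supseteq P_h$; and so on. The parity refinement --- ``all of $D$ is contained in the even part $P^{ev}$'' versus ``not'' --- is what splits $S_n$ from $S_n'$ and $B_n$ from $B_n'$; here the relevant invariant is whether $D(0,1)=\emptyset$ or not, equivalently whether the single singleton lies in $D$. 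Tracking these two binary invariants (presence of a singleton; presence of an odd-size block, equivalently whether $D\subseteq P^{ev}$) against the ``block size'' data cleanly produces the six cases $P_o,P_s,P_h,P_b,P_{s'},P_{b'}$.

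Then I would verify the converse direction --- that each of the six listed partition categories actually arises from the stated group --- which amounts to identifying, for each group in Definition 0.3, the generating partitions of its tensor category: $\cap$ alone for $O_n$; $\cap$ plus fork for $S_n$; $\cap$ plus the $4$-block for $H_n$ (since $H_n=\mathbb Z_2\wr S_n$ and the wreath product with $\mathbb Z_2$ corresponds exactly to allowing all even blocks); $\cap$ plus singleton for $B_n$; and the even-part versions for $S_n'=\mathbb Z_2\times S_n$ and $B_n'=\mathbb Z_2\times B_n$, where tensoring by $\mathbb Z_2$ restricts to even partitions. Each of these is a short computation checking that the span of the $T_p$ over the claimed category equals the known $\mathrm{Hom}$-spaces of the group, using the Tannaka--Krein / Woronowicz reconstruction to conclude the categories are in bijection with the groups.

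The main obstacle I expect is step (ii)--(iii) of the combinatorial classification: showing that a category containing some large block must contain one of the designated elementary generators, and \emph{only} those, so that no exotic intermediate category slips through. This requires a careful ``capping'' argument --- systematically composing an arbitrary partition with $\cap$ and $\cup$ blocks and with the identity to shrink it --- and one must be careful that capping can change parity and can split blocks, so the bookkeeping of which invariants are preserved is delicate. This is precisely the technique the introduction attributes to \cite{bsp},\cite{bv2}, and it is the place where the proof is genuinely nontrivial rather than a formal consequence of the axioms; the rest is organization and a finite check of base cases.
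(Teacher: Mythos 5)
Your proposal follows essentially the same route as the paper, which for this theorem simply defers to \cite{bsp}: the six categories are read off from the known Hom-spaces of the six groups, and completeness is obtained by exactly the capping/case analysis on elementary generators (singleton, double singleton, fork, four-block) that you describe. One small correction to your bookkeeping: the invariant separating $S_n,B_n$ from $S_n',B_n'$ is whether $D$ contains a partition with an odd total number of legs (equivalently, by capping, the singleton in $P(0,1)$), not whether $D$ contains an odd-size block --- the categories $P_{s'},P_{b'}$ do contain singleton blocks (e.g.\ the double singleton) while still lying in the even part.
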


This result is proved in \cite{bsp}. The idea is that the second assertion follows from some well-known results regarding the groups $O_n,S_n$ and their versions, and the first assertion can be proved by carefully manipulating the categorical axioms.

Let us discuss now the free analogue of the above results. Let $O_n^+,S_n^+$ be the free orthogonal and symmetric quantum groups, corresponding to the Hopf algebras $A_o(n),A_s(n)$ constructed by Wang in \cite{wa1}, \cite{wa2}. Here, and in what follows, we use Woronowicz's Hopf algebra formalism in \cite{wo1}, and its subsequent quantum group interpretation.

We have $S_n\subset S_n^+$, so by functoriality the Hom-spaces for $S_n^+$ appear as subspaces of the corresponding Hom-spaces for $S_n$. The Hom-spaces for $S_n^+$ have in fact a very simple description: they are spanned by the operators $T_p$, with $P\in NC(k,l)$, the set of noncrossing partitions between $k$ upper points and $l$ lower points.

We have the following ``free analogue'' of Definition 1.1.

\begin{definition}
A compact quantum group $S_n^+\subset G\subset O_n^+$ is called free if there exist sets $D(k,l)\subset NC(k,l)$ such that $Hom(u^{\otimes k},u^{\otimes l})=span(T_p|p\in D(k,l))$, for any $k,l$.
\end{definition}

In this definition, the word ``free'' has of course a quite subtle meaning, to be fully justified later on. For the moment, let us just record the fact that the passage from Definition 1.1 to Definition 1.4 is basically done by ``restricting attention to the noncrossing partitions'', which, according to \cite{spe}, should indeed lead to freeness.

As in the classical case, the sets of partitions $D(k,l)$ must be stable under certain categorical operations, coming this time from the axioms in \cite{wo2}. The corresponding algebraic structure, axiomatized in \cite{bsp}, is called ``category of noncrossing partitions''.

We denote by $H_n^+$ the hyperoctahedral quantum group, constructed in \cite{bbc}, and by $B_n^+,S_n'^+,B_n'^+$ the free analogues of the groups $B_n,S_n',B_n'$, constructed in \cite{bsp}. 

It is useful to recall at this point the definition of all quantum groups involved.

\begin{definition}
We consider the following quantum groups, all given with the defining relations between the basic coordinates $u_{ij}\in C(G)$:
\begin{enumerate}
\item $O_n^+$: orthogonality ($u_{ij}=u_{ij}^*$, $u^t=u^{-1}$).

\item $S_n^+$: magic condition (all rows and columns of $u$ are partitions of unity). 

\item $H_n^+$: cubic condition (orthogonality, and $u_{ij}u_{ik}=u_{ji}u_{ki}=0$ for $j\neq k$).

\item $B_n^+$: bistochastic condition (orthogonality, and on each row the sum is $1$).

\item $S_n'^+$: cubic condition, with the same sum on rows and columns.

\item $B_n'^+$: orthogonality, with the same sum on rows and columns.
\end{enumerate}
\end{definition}

Perhaps the very first observation is that for any of the groups $G$ appearing in Definition 1.2 we have $C(G)=C(G^+)/I$, where $I\subset C(G^+)$ is the commutator ideal. In other words, $G^+$ is indeed a ``noncommutative version'' of $G$. We refer to \cite{bsp} and to its predecessors \cite{bbc}, \cite{wa1}, \cite{wa2} for the whole story, and for a careful treatement of all this material.

We have the following ``free analogue'' of Theorem 1.3.

\begin{theorem}
There are exactly $6$ free quantum groups, namely the above ones. The corresponding categories of noncrossing partitions are as follows:
\begin{enumerate}
\item $NC_o$: all noncrossing pairings.

\item $NC_s$: all noncrossing partitions.

\item $NC_h$: noncrossing partitions with blocks of even size.

\item $NC_b$: singletons and noncrossing pairings.

\item $NC_{s'}$: all noncrossing partitions (even part).

\item $NC_{b'}$: singletons and noncrossing pairings (even part).
\end{enumerate}
\end{theorem}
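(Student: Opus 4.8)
The plan is to mirror the proof of Theorem 1.3, replacing the category of crossing partitions by the category of noncrossing partitions throughout. The argument splits into two parts. First, one checks that each of the six quantum groups $O_n^+, S_n^+, H_n^+, B_n^+, S_n'^+, B_n'^+$ is free, with the category of noncrossing partitions as indicated; second, one shows that the list of categories of noncrossing partitions on the appropriate "lattice" is exhausted by these six, which combined with a Tannaka--Krein type reconstruction argument gives the classification.

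For the first part, I would proceed case by case. For $O_n^+$ the statement is Woronowicz's Tannaka--Krein duality together with the well-known fact (from \cite{bc}-type computations, recalled in \cite{bsp}) that the intertwiner spaces of $O_n^+$ are spanned by the $T_p$ with $p\in NC_2(k,l)$; the generator is the single pairing in $NC(0,2)$, corresponding to the relation $u^t=u^{-1}$. For $S_n^+$ this is exactly the description of the Hom-spaces recalled just before Definition 1.4, the generating partition being the "fork" $NC(2,1)$ encoding the magic condition. For $H_n^+, B_n^+, S_n'^+, B_n'^+$, one notes that each is obtained from $O_n^+$ by imposing extra relations among the $u_{ij}$, and each such relation is precisely the intertwiner relation $T_p\in Hom(u^{\otimes k},u^{\otimes l})$ for a specific small partition $p$ (the four-block $NC(0,4)$ for the cubic condition, the singleton $NC(0,1)$ for bistochasticity, and so on). Then one must verify that the category generated by $NC_2$ together with that extra partition is exactly the claimed $NC_h, NC_b, NC_{s'}, NC_{b'}$; this is a routine but slightly tedious combinatorial check using vertical/horizontal concatenation and rotation, entirely parallel to \cite{bsp}. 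Finally, $C(G^+)$ defined by these relations really is the universal $C^*$-algebra with $Hom$-spaces generated by the $T_p$ — this is again Woronowicz's reconstruction.

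For the second part — the actual classification — I would enumerate all categories $D=\bigl(D(k,l)\bigr)$ of noncrossing partitions satisfying $NC_2\subset D\subset NC$ (equivalently $S_n^+\subset G\subset O_n^+$ forces, on intertwiners, $NC_2\subset D$). The key reduction is that such a category is determined by which "small" partitions it contains, because every noncrossing partition can be generated from pairings, singletons, and blocks of a few controlled sizes via the categorical operations, the noncrossing constraint severely limiting what can appear. Concretely: if $D$ contains a singleton, then by capping arguments it contains all singletons, and one analyses whether it also contains partitions with larger blocks (leading to $NC_s, NC_b$ or, in the "even" sub-case, $NC_{s'}, NC_{b'}$); if $D$ contains no singleton, then $D\subset NC_{even}$ and one asks whether $D$ contains a block of size $4$ (giving $NC_h$) or not (giving $NC_o$). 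The argument that no block of size $6, 8, \dots$ can appear without a block of size $4$, and that a block of size $4$ plus pairings generates exactly $NC_h$, is the heart of the matter and is done by explicit "capping": pre- or post-composing $T_p$ with pairings and singletons to extract a smaller partition, then inducting on the number of points. I expect this combinatorial dichotomy — proving that the capping operations force the category into one of the six standard forms, with no exotic noncrossing category slipping through — to be the main obstacle, though it is substantially easier than in the classical case precisely because crossings are forbidden. Once the list of six categories is established, Woronowicz's Tannaka--Krein duality \cite{wo2} identifies each with the corresponding quantum group from Definition 1.5, completing the proof.
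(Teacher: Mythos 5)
Your proposal follows essentially the same route as the paper, which does not reprove this theorem but defers to \cite{bsp} with exactly the two-part structure you describe: the six categories are identified from the defining relations of $O_n^+,S_n^+,H_n^+,B_n^+,S_n'^+,B_n'^+$ via Tannakian duality, and the exhaustiveness of the list is established by manipulating the categorical axioms (capping and generation) to pin down which block sizes and parities a category $NC_2\subset D\subset NC$ can contain. Your identification of the key combinatorial step --- the singleton/no-singleton and even-part dichotomies, and the fact that a four-block together with pairings generates $NC_h$ --- matches the intended argument.
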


Once again, this result is proved in \cite{bsp}. The idea is that the second assertion follows from some standard results regarding $O_n^+,S_n^+$ and their versions $H_n^+,B_n^+,S_n'^+,B_n'^+$, and the first assertion can be proved by carefully manipulating the categorical axioms.

Observe the symmetry between Theorem 1.3 and Theorem 1.6: this corresponds to the ``liberation'' operation for orthogonal Lie groups, further investigated in \cite{bsp}.

\section{Half-liberation}

We consider now the general situation where we have a compact quantum group satisfying $S_n\subset G\subset O_n^+$. Once again, we can ask for the tensor category of $G$ to be spanned by certain partitions, coming from the tensor category of $S_n$.

\begin{definition}
A compact quantum group $S_n\subset G\subset O_n^+$ is called easy if there exist sets $D(k,l)\subset P(k,l)$ such that $Hom(u^{\otimes k},u^{\otimes l})=span(T_p|p\in D(k,l))$, for any $k,l$.
\end{definition}

As a first remark, this definition generalizes at the same time Definition 1.1 and Definition 1.4. Indeed, the easy quantum groups $S_n\subset G\subset O_n^+$ satisfying the extra assumption $G\subset O_n$ are precisely the easy groups, and those satisfying the extra assumption $S_n^+\subset G$ are precisely the free quantum groups. This follows indeed from definitions, see \cite{bsp}.

Once again, the sets of partitions $D(k,l)$ must be stable under certain categorical operations, coming from Woronowicz's axioms in \cite{wo2}. The corresponding algebraic structure, axiomatized in \cite{bsp}, will be here simply called ``category of partitions''.

We already know that the easy quantum groups include the 6 easy groups in Theorem 1.3, and the 6 free quantum groups in Theorem 1.6. In general, the world of easy quantum groups is quite rigid, but we can produce some more examples in the following way.

\begin{definition}
The half-liberated version of an easy group $G$ is the quantum group $G^*$ given by $C(G^*)=C(G^+)/I$, where $I$ is the ideal generated by the half-commutation relations $abc=cba$, imposed to the basic matrix coordinates $u_{ij}\in C(G^+)$.
\end{definition}

In other words, instead of removing the commutativity relations of type $ab=ba$ from the standard presentation of $C(G)$, which would produce the algebra $C(G^+)$, we replace these commutativity relations by the weaker relations $abc=cba$.

In order to study the half-liberated versions, we need a categorical interpretation of the half-commutation relations $abc=cba$. Let us agree that the upper points of a partition $p\in P(k,l)$ are labeled $1,2,\ldots,k$, and the lower points are labeled $1',2',\ldots,l'$. 

With this notation, we have the following key lemma, from \cite{bsp}.

\begin{lemma}
For a compact quantum group $G\subset O_n^+$, the following are equivalent:
\begin{enumerate}

\item The basic coordinates $u_{ij}$ satisfy $abc=cba$.

\item We have $T_p\in End(u^{\otimes 3})$, where $p=(13')(22')(3'1)$.
\end{enumerate}
\end{lemma}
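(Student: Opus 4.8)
The plan is to compute the operator $T_p$ explicitly for the partition $p=(13')(22')(3'1)$ — i.e. the partition of $P(3,3)$ whose three blocks pair the first upper point with the third lower point, the second upper point with the second lower point, and the third upper point with the first lower point — and then to translate the condition $T_p\in\mathrm{End}(u^{\otimes 3})$ into relations among the $u_{ij}$ using the standard intertwining identity. From the formula for $T_p$ in Section~1, we have
$$T_p(e_a\otimes e_b\otimes e_c)=e_c\otimes e_b\otimes e_a,$$
since the $\delta$-symbol forces $j_1=i_3$, $j_2=i_2$, $j_3=i_1$; in other words $T_p$ is simply the ``flip of the outer legs'' on $(\mathbb C^n)^{\otimes 3}$.

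Next I would write down what $T_p\in\mathrm{End}(u^{\otimes 3})$ means. For a compact quantum group $G\subset O_n^+$ with fundamental corepresentation $u$, a linear map $T\colon(\mathbb C^n)^{\otimes 3}\to(\mathbb C^n)^{\otimes 3}$ lies in $\mathrm{End}(u^{\otimes 3})$ precisely when $T u^{\otimes 3}=u^{\otimes 3}T$ as elements of $M_{n^3}(C(G))$; writing this out on basis vectors and comparing coefficients of $e_{j_1}\otimes e_{j_2}\otimes e_{j_3}$ gives, for all multi-indices,
$$\sum_{a b c}\delta_p\begin{pmatrix}j_1&j_2&j_3\\ a&b&c\end{pmatrix}u_{ai_1}u_{bi_2}u_{ci_3}=\sum_{abc}u_{j_1a}u_{j_2b}u_{j_3c}\,\delta_p\begin{pmatrix}a&b&c\\ i_1&i_2&i_3\end{pmatrix}.$$
Using the explicit $\delta_p$ computed above, the left-hand side collapses to $u_{j_3i_1}u_{j_2i_2}u_{j_1i_3}$ and the right-hand side to $u_{j_1i_3}u_{j_2i_2}u_{j_3i_1}$. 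Thus $T_p\in\mathrm{End}(u^{\otimes 3})$ is equivalent to
$$u_{j_3i_1}u_{j_2i_2}u_{j_1i_3}=u_{j_1i_3}u_{j_2i_2}u_{j_3i_1}\qquad\text{for all indices,}$$
which, after renaming, is exactly the statement that any three coordinates $a,b,c$ of $u$ satisfy $abc=cba$ — giving (1)$\Leftrightarrow$(2). One direction (assuming (1), deduce (2)) is then immediate; for the converse one specializes the index identity to arbitrary triples $a=u_{j_3i_1}$, $b=u_{j_2i_2}$, $c=u_{j_1i_3}$ and notes that as the indices range freely these exhaust all triples of matrix entries.

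The only genuine subtlety — and the step I would be most careful about — is the bookkeeping in the intertwiner relation: one must make sure the correct version of the relation $T u=uT$ is used for corepresentations (as opposed to representations), that the leg-numbering convention for $u^{\otimes 3}$ matches the point-labeling convention for $p$ (upper points $\leftrightarrow$ source legs, lower points $\leftrightarrow$ target legs), and that the self-adjointness $u_{ij}=u_{ij}^*$ is not secretly needed. In fact no extra hypothesis beyond $G\subset O_n^+$ is required, because the relation $abc=cba$ is symmetric under reversing the word, so the two sides of the intertwiner identity are honest mirror images of each other; this is why the equivalence is clean. Since this lemma is quoted from \cite{bsp}, I would present the computation compactly and refer there for the verification that $T_p$ is well-defined as a morphism in the relevant category.
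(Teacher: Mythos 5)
Your proposal is correct and follows essentially the same route as the paper: compute $T_p(e_a\otimes e_b\otimes e_c)=e_c\otimes e_b\otimes e_a$, write out the intertwining condition $T_pu^{\otimes 3}=u^{\otimes 3}T_p$ on basis vectors, and identify coefficients to obtain $abc=cba$ for arbitrary entries. The bookkeeping you flag as the main subtlety is exactly what the paper's two displayed formulae carry out, so there is nothing to add.
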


\begin{proof}
According to the definition of $T_p$ given in section 1, we have $T_p(e_a\otimes e_b\otimes e_c)=e_c\otimes e_b\otimes e_a$. This gives the following formulae:
\begin{eqnarray*}
T_pu^{\otimes 3}(e_a\otimes e_b\otimes e_c)&=&\sum_{ijk}e_k\otimes e_j\otimes e_i\otimes u_{ia}u_{jb}u_{kc}\\
u^{\otimes 3}T_p(e_a\otimes e_b\otimes e_c)&=&\sum_{ijk}e_i\otimes e_j\otimes e_k\otimes u_{ic}u_{jb}u_{ka}
\end{eqnarray*}

The identification of the right terms gives the equivalence in the statement.
\end{proof}

Let us go back now to the quantum groups $G^*$. Observe first that we have inclusions $G\subset G^*\subset G^+$. As pointed out in \cite{bsp}, the cases $G=S_n,B_n,S_n',B_n'$ are not interesting, because here we have $G=G^*$. This can be checked by a direct computation with generators and relations, or with the partition $p$ appearing in Lemma 2.3, and will follow as well from the general classification results in sections 5-6 below.

In the cases $G=O_n,H_n$, however, we obtain new quantum groups. Let us agree that the legs of each partition are labeled $1,2,3,\ldots$, clockwise starting from top left. 

\begin{theorem}
The half-liberated versions of $O_n,H_n$ are easy quantum groups, and the corresponding categories of partitions are as follows:
\begin{enumerate}
\item $E_o$: pairings with each string connecting an odd number to an even number.

\item $E_h$: partitions with each block having the same number of odd and even legs.
\end{enumerate}
\end{theorem}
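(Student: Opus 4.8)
The plan is to reduce the statement to two purely combinatorial identities between categories of partitions. By the definition of the half-liberation ($C(G^*)=C(G^+)/I$, with $I$ the ideal generated by the relations $abc=cba$ among the coordinates $u_{ij}$) together with Lemma 2.3, passing from $O_n^+$, resp.\ $H_n^+$, to $O_n^*$, resp.\ $H_n^*$, amounts on the Tannakian side to adjoining the single operator $T_h$, where $h\in P(3,3)$ is the half-crossing, whose blocks in the clockwise labelling are $\{1,4\},\{2,5\},\{3,6\}$. Since the category of $O_n^+$ is $NC_o$ and that of $H_n^+$ is $NC_h$, Woronowicz's Tannaka--Krein duality identifies the categories of partitions of $O_n^*$ and $H_n^*$ with the generated categories $\langle NC_o,h\rangle$ and $\langle NC_h,h\rangle$; in particular both quantum groups are easy. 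Thus the theorem reduces to the identities $\langle NC_o,h\rangle=E_o$ and $\langle NC_h,h\rangle=E_h$, each of which I would prove by a double inclusion.

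For the inclusions ``$\subseteq$'' it is enough to check that $E_o,E_h$ are categories of partitions containing $NC_o\subset E_o$, $NC_h\subset E_h$ and $h$. The membership $h\in E_o\subset E_h$ is immediate from the block list above. That $NC_h\subset E_h$ (hence $NC_o\subset E_o$) follows from a gap argument: in a noncrossing partition all of whose blocks have even size, two cyclically consecutive legs of a block are separated by a fully matched --- hence even --- set of legs, so the legs of each block alternate in parity and, the block being of even size, split evenly into odd and even. Stability of $E_o,E_h$ under the three categorical operations is then a short ``charge'' computation: give the legs the alternating charges $+1,-1,+1,\ldots$ in clockwise order, so that a block is balanced exactly when its charges sum to zero; horizontal concatenation shifts one boundary by an even amount, rotation changes the labelling only by a cyclic shift, the upside-down turning reverses it, and in a composition the charges carried by the two partitions along the glued middle layer are opposite --- so in every case the charge of each block is preserved up to a global sign.

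The substance lies in ``$\supseteq$'', where the two cases proceed in parallel. After rotating an arbitrary $q\in E_o$ or $q\in E_h$ so that all of its legs lie on a single row, the key observation is that composing with $\mathrm{id}^{\otimes a}\otimes h\otimes\mathrm{id}^{\otimes b}$, for a suitable placement, interchanges the contents of two consecutive odd legs (leaving the even leg between them fixed), or symmetrically of two consecutive even legs, and that no closed loop ever arises, since the half-crossing --- like the identity --- connects only top legs to bottom legs. These moves therefore realize arbitrary independent permutations of the odd legs and of the even legs of $q$. But any balanced partition can be carried, by such a permutation, to the normal form in which each block occupies a consecutive set of legs: one sorts the odd legs, and separately the even legs, block by block, each block being fillable with equally many odd and even legs precisely because it is balanced. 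This normal form is manifestly noncrossing with all blocks of even size, so it lies in $NC_o$ when $q$ is a pairing and in $NC_h$ in general; since $q$ is recovered from it by a composition of half-crossing gadgets and rotations --- all of which lie in $\langle NC_o,h\rangle$, resp.\ $\langle NC_h,h\rangle$ --- this yields $E_o\subseteq\langle NC_o,h\rangle$ and $E_h\subseteq\langle NC_h,h\rangle$, and the proof is complete.

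The main obstacle is the local analysis behind the third paragraph: one must verify carefully that composing $q$ with $\mathrm{id}^{\otimes a}\otimes h\otimes\mathrm{id}^{\otimes b}$ acts as a bare transposition of two equal-parity legs with no side effects --- no new crossings among the other strings, no closed loops contributing powers of $n$ --- and, on the other hand, carry out the charge bookkeeping showing that $E_o$ and $E_h$ really are categories. Once these local facts are secured, the connectivity step --- permute the legs into the noncrossing normal form --- is essentially forced, and it is precisely the parity/balance condition that makes the required permutations admissible: a bare transposition of two adjacent legs does not belong to $E_o$, which is exactly what distinguishes $O_n^*$ from $O_n$, whose category $P_o$ consists of all pairings.
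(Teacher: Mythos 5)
Your proposal is correct and follows essentially the same route as the paper: a Tannakian reduction to the combinatorial identities $\langle NC_o,h\rangle=E_o$ and $\langle NC_h,h\rangle=E_h$, a verification that $E_o,E_h$ are categories of partitions via the parity/charge cancellation along the glued middle row, and then the generation statement. The only real difference is that you actually carry out the hard inclusion $E_x\subseteq\langle NC_x,h\rangle$ --- by using the half-crossing gadgets to transpose equal-parity legs and sort any balanced partition into a noncrossing interval normal form --- whereas the paper dismisses this step as ``a straightforward computation'' and, in the $O_n^*$ case, delegates it to \cite{bsp}.
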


\begin{proof}
Our claim is that $E_o,E_h$ are categories of partitions, corresponding respectively to the quantum groups $O_n^*,H_n^*$. Indeed, this can be checked as follows:

(1) Here $E_o$ is nothing but the set of pairings which each string having an even number of crossings, and the result was proved in \cite{bsp}. The idea is that $E_o$ is generated in the categorical sense by the partition $p$ appearing in Lemma 2.3.

(2) The fact that $E_h$ is indeed a category of partitions follows from definitions: the idea is to think of each block as being ``balanced'' with respect to the odd and even labels, and with this interpretation in mind, the categorical operations clearly preserve the balancing. For instance when checking the stability under composition, which is the crucial axiom, the point is that given a connected union of blocks of the two partitions which are composed, the ``balancing in the middle'' is subject to a cancelling.

As for the fact that $E_h$ corresponds to the above quantum group $H_n^*$, this can be checked in several ways. Consider for instance the following diagram:
$$\begin{matrix}
O_n^*&\subset&O_n^+\\
\\
\cup&&\cup\\
\\
H_n^*&\subset&H_n^+
\end{matrix}$$

We know from definitions that $H_n^*$ is obtained by putting together the relations for $O_n^*$ and for $H_n$, so we have the quantum group equality $H_n^*=O_n^*\cap H_n^+$. Now by the general properties of Tannakian duality, it follows that the category of partitions of $H_n^*$ is generated by the category of partitions for $H_n^+$, namely the noncrossing partitions having even blocks, and by the half-liberation partition $p$ in Lemma 2.3. 

Now this category is by definition included into $E_h$, and the reverse inclusion can be checked as well, by a straightforward computation.
\end{proof}

The quantum group $O_n^*$, which first appeared in \cite{bsp}, was further investigated in \cite{bv2}. In order to get some insight into the structure of $H_n^*$, we will use similar methods.

\begin{definition}
The projective version of a quantum group $G\subset U_n^+$ is the quantum group $PG\subset U_{n^2}^+$, having as basic coordinates the elements $v_{ij,kl}=u_{ik}u_{jl}^*$.
\end{definition}

In other words, $C(PG)\subset C(G)$ is the algebra generated by the elements $v_{ij,kl}=u_{ik}u_{jl}^*$. In the case where $G$ is a classical group we recover of course the well-known formula $PG=G/(G\cap T)$, where $T\subset U_n$ are the unitary diagonal matrices. We refer to \cite{bv2} for a full discussion of this notion, including a list of concrete examples. 

Consider now the compact group $K_n=\mathbb T\wr S_n$ consisting of monomial (i.e. permutation-like) matrices, with elements on the unit circle $\mathbb T$ as nonzero entries. 

The following result, whose first assertion is from \cite{bv2}, will play a key role in the study of $H_n^*$, and of the other quantum groups to be introduced in this paper.

\begin{theorem}
The projective versions of half-liberations are as follows:
\begin{enumerate}
\item $PO_n^*=PU_n$.

\item $PH_n^*=PK_n$.
\end{enumerate}
\end{theorem}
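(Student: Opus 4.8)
The first assertion is proved in \cite{bv2}, so the plan is to establish the second one, $PH_n^*=PK_n$, by adapting to $H_n^*$ the methods used there for $O_n^*$. I would do this by producing surjective $*$-homomorphisms in both directions, $C(PH_n^*)\to C(PK_n)$ and $C(PK_n)\to C(PH_n^*)$, each sending the standard generator $v_{ij,kl}$ to $v_{ij,kl}$. Since a $*$-homomorphism between these algebras that fixes all the $v_{ij,kl}$ must be the identity (these elements generate), two such surjections are automatically mutually inverse; and as each intertwines the comultiplications, which are given on both sides by the same formula $\Delta(v_{ij,kl})=\sum_{ab}v_{ij,ab}\otimes v_{ab,kl}$, we obtain $PH_n^*=PK_n$ as compact quantum groups.

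For the surjection $C(PH_n^*)\to C(PK_n)$ I would use a matrix model for $C(H_n^*)$, in the spirit of \cite{bv2}. Let $w=(w_{ij})$ be the fundamental corepresentation of $K_n=\mathbb T\wr S_n$, i.e. the coordinate functions of the monomial matrices with entries on the unit circle, and set
$$\pi(u_{ij})=\begin{pmatrix}0&w_{ij}\\ w_{ij}^*&0\end{pmatrix}\in M_2(C(K_n)).$$
These matrices satisfy the defining relations of $C(H_n^*)$: they are self-adjoint; the orthogonality relations $\sum_j\pi(u_{ij})\pi(u_{kj})=\delta_{ik}$ and $\sum_i\pi(u_{ij})\pi(u_{il})=\delta_{jl}$ hold because the rows and columns of $w$ are orthonormal; the cubic relations $\pi(u_{ij})\pi(u_{ik})=\pi(u_{ji})\pi(u_{ki})=0$ for $j\neq k$ hold because every row and every column of a monomial matrix has a single nonzero entry; and the half-commutation relations hold because $C(K_n)$ is commutative. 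Thus $\pi$ is a well-defined $*$-homomorphism, and a direct computation gives
$$\pi(v_{ij,kl})=\pi(u_{ik})\pi(u_{jl})^*=\begin{pmatrix}w_{ik}w_{jl}^*&0\\ 0&w_{ik}^*w_{jl}\end{pmatrix}.$$
Hence $\pi$ maps $C(PH_n^*)$ into the diagonal subalgebra $C(K_n)\oplus C(K_n)\subset M_2(C(K_n))$, and composing with the projection onto the first coordinate gives a $*$-homomorphism $C(PH_n^*)\to C(K_n)$ with $v_{ij,kl}\mapsto w_{ik}w_{jl}^*$, whose image is exactly $C(PK_n)$.

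For the reverse surjection $C(PK_n)\to C(PH_n^*)$ I would combine the first assertion with the equality $H_n^*=O_n^*\cap H_n^+$ from the proof of Theorem 2.4. The inclusion $H_n^*\subset O_n^*$ gives a surjection $C(O_n^*)\to C(H_n^*)$, which restricts to a surjection $C(PO_n^*)\to C(PH_n^*)$ sending $v_{ij,kl}$ to $v_{ij,kl}$. Since the coordinates of $H_n^*$ are self-adjoint, the cubic relations give $v_{ii,jk}=u_{ij}u_{ik}^*=u_{ij}u_{ik}=0$ for $j\neq k$, and likewise $v_{ik,jj}=0$ for $i\neq k$; so these ``monomiality'' relations hold in $C(PH_n^*)$. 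On the other hand, under the identification $PO_n^*=PU_n$ (which matches up the respective projective coordinates), and by Gelfand duality, the quotient of the commutative algebra $C(PU_n)$ by the ideal generated by these relations is the algebra of functions on the set of classes $[g]\in PU_n$ whose representatives are monomial, i.e. it is precisely $C(PK_n)$. Therefore the surjection $C(PU_n)=C(PO_n^*)\to C(PH_n^*)$ factors through $C(PK_n)$, giving the required surjection $C(PK_n)\to C(PH_n^*)$ with $v_{ij,kl}\mapsto v_{ij,kl}$.

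The whole argument rests on the first assertion $PO_n^*=PU_n$, which is used as a black box; granting it, the remaining steps are mechanical. The verification that $\pi$ respects the relations of $C(H_n^*)$ is routine but essential, and the only other point deserving a moment's attention is the Gelfand-duality claim that adjoining the relations $v_{ii,jk}=0$ and $v_{ik,jj}=0$ to $C(PU_n)$ yields $C(PK_n)$ and nothing larger — which is immediate since everything in sight is commutative and $PK_n$ is by definition the image of $K_n=\mathbb T\wr S_n$ in $PU_n$. I do not expect a serious obstacle: the argument is a transcription of the $O_n^*$ case of \cite{bv2}, with $K_n$ replacing $U_n$ and the cubic condition providing the only extra relations.
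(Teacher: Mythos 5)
Your argument is correct, but it takes a genuinely different route from the paper's. The paper proves $PH_n^*=PK_n$ Tannakian-style: it computes, for the four quantum groups in the square $H_n\subset H_n^*$, $K_n\subset U_n^+$, the spaces $Hom((u\otimes\bar u)^{\otimes k},(u\otimes\bar u)^{\otimes l})$ as spans of the operators $T_p$ with $p$ ranging over $P_h(2k,2l)$, $E_h(2k,2l)$ and $E_o(2k,2l)$ respectively, and then invokes the principle from \cite{bv2} that two quantum subgroups of $PU_n^+$ having the same diagrams must coincide. You instead work entirely at the level of generators and relations: the antidiagonal $2\times 2$ model $u_{ij}\mapsto\left(\begin{smallmatrix}0&w_{ij}\\ w_{ij}^*&0\end{smallmatrix}\right)$ over $C(K_n)$ produces the surjection $C(PH_n^*)\to C(PK_n)$ (and your verification of the orthogonality, cubic and half-commutation relations for this model is complete), while Gelfand duality applied to the vanishing of $v_{ii,jk}$ and $v_{ik,jj}$ inside $C(PU_n)=C(PO_n^*)$ produces the reverse surjection; since both maps fix the generating projective coordinates, they are mutually inverse and intertwine the comultiplications. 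Both proofs consume $PO_n^*=PU_n$ as essential input. What the paper's approach buys is the explicit partition description $E_h(2k,2l)$ of the Hom-spaces of $K_n$ and $H_n^*$, which is reused verbatim for the series $H_n^{(s)}$ in Theorem 3.4 and again in the character computations of Section 7; what yours buys is concreteness, and it sidesteps the one delicate step of the paper's argument (identifying the diagrams of $K_n$ with the balanced partitions), at the cost of not generalizing quite as mechanically to $H_n^{(s)}$ --- though a model with entries built from $s$-th roots of unity should adapt. The only point worth making explicit in your write-up is that the identification $PO_n^*=PU_n$ of \cite{bv2} does match the coordinate $u_{ik}u_{jl}$ with $u_{ik}\bar u_{jl}$, since the factoring step depends on this; that is indeed how the equality is established there, so this is a matter of bookkeeping rather than a gap.
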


\begin{proof}
The first assertion is proved in \cite{bv2}, the idea being that the partitions for $PO_n^*$ and for $PU_n$ are the same. For the second assertion, we use a similar method. Observe first that from $H_n^*\subset O_n^*$ we get $PH_n^*\subset PO_n^*=PU_n$, so $PH_n^*$ is indeed a classical group.

In order to compute this group, consider the following diagram:
$$\begin{matrix}
K_n&\subset&U_n^+\\
\\
\cup&&\cup\\
\\
H_n&\subset&H_n^*
\end{matrix}$$

We fix $k,l\geq 0$ and we consider the formal words $\alpha=(u\otimes\bar{u})^{\otimes k}$ and $\beta=(u\otimes\bar{u})^{\otimes l}$. Our claim is that the corresponding spaces $Hom(\alpha,\beta)$ for our 4 quantum groups appear as span of the operators $T_p$, with $p$ belonging to the following 4 sets of partitions:
$$\begin{matrix}
E_h(2k,2l)&\supset&E_o(2k,2l)\\ 
\\ 
\cup&&\cup\\
\\
P_h(2k,2l)&\supset&E_h(2k,2l)
\end{matrix}$$

Indeed, the bottom left set is the good one, thanks to Theorem 1.3. The bottom right set is also the good one, thanks to Theorem 2.4. For the top right set, this follows from the equality $PO_n^*=PU_n$ and from Theorem 2.4, and for full details here we refer to \cite{bv2}. As for the top left set, this follows for instance from the various results in \cite{ba2}, \cite{bb+}, \cite{bv1} regarding $K_n^+$, after ``adding a crossing''. A direct proof here can be obtained as well, by working out the categorical interpretation of the various relations defining $K_n$.

Summarizing, we have computed the relevant diagrams for the projective versions of our four algebras. So, let us look now at these projective versions:
$$\begin{matrix}
PK_n&\subset&PU_n^+\\
\\
\cup&&\cup\\
\\
PH_n&\subset&PH_n^*
\end{matrix}$$

The quantum groups $PH_n^*,PK_n$ appear as subgroups of the same quantum group, namely $PU_n^+$, and the above discussion tells us that these subgroups have the same diagrams. By using the same argument as in \cite{bv2}, we conclude that we have $PH_n^*=PK_n$.
\end{proof}

\section{The hyperoctahedral series}

In this section introduce a new series of quantum groups, $H_n^{(s)}$ with $s\in\{2,3,\ldots,\infty\}$. These will ``interpolate'' between $H_n^{(2)}=H_n$ and $H_n^{(\infty)}=H_n^*$.

The quantum group $H_n^{(s)}$ is obtained from $H_n^*$ by imposing the ``$s$-commutation'' condition $abab\ldots=baba\ldots$ (length $s$ words) to the basic coordinates $u_{ij}$. It is convenient to write down the complete definition of $H_n^{(s)}$, which is as follows.

\begin{definition}
$C(H_n^{(s)})$ is the universal $C^*$-algebra generated by $n^2$ self-adjoint variables $u_{ij}$, subject to the following relations:
\begin{enumerate}
\item Orthogonality: $uu^t=u^tu=1$, where $u=(u_{ij})$ and $u^t=(u_{ji})$.

\item Cubic relations: $u_{ij}u_{ik}=u_{ji}u_{ki}=0$, for any $i$ and any $j\neq k$.

\item Half-commutation: $abc=cba$, for any $a,b,c\in \{u_{ij}\}$.

\item $s$-mixing relation: $abab\ldots=baba\ldots$ (length $s$ words), for any $a,b\in\{u_{ij}\}$.
\end{enumerate}
\end{definition}

The fact that $H_n^{(s)}$ is indeed a quantum group follows from the elementary fact that the cubic relations are of ``Hopf type'', i.e. that they allow the construction of the Hopf algebra maps $\Delta,\varepsilon,S$. This can be checked indeed by a routine computation.

Observe that at $s=2$ the $s$-mixing is the usual commutation $ab=ba$. This relation being stronger than the half-commutation $abc=cba$, we are led to the algebra generated by $n^2$ commuting self-adjoint variables satisfying (1,2), which is $C(H_n)$.

As for the case $s=\infty$, the $s$-mixing relation disappears here by definition. Thus we are led to the algebra defined by the relations (1,2,3), which is $C(H_n^*)$.

Summarizing, we have $H_n^{(2)}=H_n$ and $H_n^{(\infty)}=H_n^*$, as previously claimed. In what follows we present a detailed study of $H_n^{(s)}$, our first technical result being as follows.  

\begin{lemma}
For a compact quantum group $G\subset H_n^*$, the following are equivalent:
\begin{enumerate}

\item The basic coordinates $u_{ij}$ satisfy $abab\ldots=baba\ldots$ (length $s$ words).

\item We have $T_p\in End(u^{\otimes s})$, where $p=(135\ldots 2'4'6'\ldots)(246\ldots 1'3'5'\ldots)$.
\end{enumerate}
\end{lemma}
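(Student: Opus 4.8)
The plan is to mimic exactly the proof of Lemma 2.3, which established the analogous equivalence for the half-commutation partition. The key observation is that the partition $p=(135\ldots 2'4'6'\ldots)(246\ldots 1'3'5'\ldots)$ connects the odd upper points together with the even lower points in one block, and the even upper points together with the odd lower points in the other block, so that as a linear map $T_p$ acts on $(\mathbb C^n)^{\otimes s}$ by sending $e_{a_1}\otimes e_{a_2}\otimes\cdots\otimes e_{a_s}$ to a multiple of a single basis tensor: the delta symbol forces $a_1=a_3=a_5=\cdots$ and $a_2=a_4=a_6=\cdots$, and then the output has the two index values swapped into the appropriate positions. Concretely, if $s=2m$ then $T_p(e_a\otimes e_b\otimes e_a\otimes e_b\otimes\cdots)=e_b\otimes e_a\otimes e_b\otimes e_a\otimes\cdots$ and $T_p$ kills any tensor not of this alternating form; if $s=2m+1$ one gets a similar formula with the roles at the ends adjusted, and $T_p(e_a\otimes e_b\otimes\cdots\otimes e_a)=e_b\otimes e_a\otimes\cdots\otimes e_b$. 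The first step is therefore to write out this action of $T_p$ explicitly from the definition in Section 1.

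Next I would compute the two composite maps $T_p u^{\otimes s}$ and $u^{\otimes s}T_p$ applied to a generic alternating basis tensor, exactly as in the proof of Lemma 2.3. Applying $u^{\otimes s}$ first and then $T_p$ produces $\sum e_{(\cdot)}\otimes\cdots\otimes u_{i_1a}u_{i_2b}u_{i_3a}\cdots$ with the output legs permuted according to $p$, i.e. a sum whose coefficient is the length-$s$ alternating word $u_{\cdot}u_{\cdot}u_{\cdot}\cdots$ read in one order; applying $T_p$ first and then $u^{\otimes s}$ produces the analogous sum whose coefficient is the alternating word read in the opposite order, $u_{\cdot}u_{\cdot}u_{\cdot}\cdots$. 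Matching the coefficients of a fixed output basis tensor on both sides yields precisely the relations $u_{i_1a}u_{i_2b}u_{i_3a}\cdots=u_{i_2a}u_{i_1b}u_{i_2a}\cdots$ for all choices of indices, which upon renaming is the statement that every pair $u_{ij},u_{kl}$ of basic coordinates $s$-commutes, i.e. $abab\ldots=baba\ldots$ with $s$ letters. Since $T_p$ annihilates non-alternating tensors on both sides, there are no further conditions, so $T_p\in End(u^{\otimes s})$ is equivalent to (1).

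One technical point worth being careful about, and the place where the bookkeeping is most error-prone, is the dependence on the parity of $s$: the partition $p$ has a slightly different combinatorial shape according to whether $s$ is even or odd (in the odd case both the first and last upper legs are odd, so they lie in the same block, and correspondingly the first and last lower legs split differently), and one must check that in both cases the resulting word identity is genuinely the length-$s$ alternating relation rather than something shorter or degenerate. I would handle this by doing the even case in full and then remarking that the odd case is identical after the obvious relabeling of the endpoints. A second minor point is that we are working inside $G\subset H_n^*$ rather than $G\subset O_n^+$, but this hypothesis is not actually used in the equivalence itself — it only guarantees that $T_p$ already makes sense in the right ambient category and that the other relations of $H_n^{(s)}$ are in force — so the proof of the equivalence is purely the computation above. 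The main obstacle, then, is not conceptual but purely notational: writing the alternating tensors and the permuted index words cleanly enough that the matching of coefficients is transparent.
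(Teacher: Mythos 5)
Your overall route is the same as the paper's: write out $T_pu^{\otimes s}$ and $u^{\otimes s}T_p$ on basis tensors, exactly as in Lemma 2.3, and match coefficients. The description of $T_p$ (delta forcing all odd-position indices equal and all even-position indices equal, output swapping the two common values) is correct, and extracting the $s$-mixing relation $abab\ldots=baba\ldots$ from the coefficients where both the input and the output tensors are of the alternating form is also correct. That gives $(2)\Rightarrow(1)$.

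The gap is in your final paragraph, where you assert that ``$T_p$ annihilates non-alternating tensors on both sides, there are no further conditions,'' and consequently that the hypothesis $G\subset H_n^*$ is not used. This is false, and it breaks the direction $(1)\Rightarrow(2)$. Since $u^{\otimes s}$ spreads any basis vector over all of $(\mathbb C^n)^{\otimes s}$, the composite $T_pu^{\otimes s}$ applied to a non-alternating input, say $e_{a_1}\otimes e_{b_1}\otimes e_{a_2}\otimes\cdots$ with $a_1\neq a_2$, equals $\sum_{k,l}e_l\otimes e_k\otimes e_l\otimes\cdots\otimes u_{ka_1}u_{lb_1}u_{ka_2}\cdots$, which is not formally zero, whereas $u^{\otimes s}T_p$ kills that input; symmetrically, on an alternating input $u^{\otimes s}T_p$ produces non-alternating output tensors with coefficients $u_{m_1b}u_{n_1a}u_{m_2b}\cdots$ ($m_1\neq m_2$) that $T_pu^{\otimes s}$ does not produce. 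Hence $T_p\in End(u^{\otimes s})$ encodes, besides the $s$-mixing relations, the vanishing relations $u_{ka_1}u_{lb_1}u_{ka_2}\cdots=0$ whenever the row (or column) indices in the alternating word fail to repeat; for $s=3$ this is exactly the ultracubic relation $acb=0$ of Section 4. These vanishing relations are not consequences of $abab\ldots=baba\ldots$ alone; they hold here only because $G\subset H_n^*$, where half-commutation together with the cubic relations implies them (this is the content of the proof of Proposition 4.4, part (1)). So you must invoke the standing hypothesis to dispose of these extra coefficient identities; without it, condition (2) is strictly stronger than condition (1) and the lemma as you argue it would be wrong.
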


\begin{proof}
According to the definition of $T_p$ given in section 1, the operator associated to the partition in the statement is given by the following formula:
$$T_p(e_{a_1}\otimes e_{b_1}\otimes e_{a_2}\otimes e_{b_2}\otimes\ldots)=\delta(a)\delta(b)e_b\otimes e_a\otimes e_b\otimes e_a\otimes\ldots$$

Here we use the convention $\delta(a)=1$ if all the indices $a_i$ are equal, and $\delta(a)=0$ if not, along with a similar convention for $\delta(b)$. As for the indices $a,b$ appearing on the right, these are the common values of the $a$ indices and $b$ indices, respectively, in the case $\delta(a)=\delta(b)=1$, and are irrelevant quantities in the remaining cases.

This gives the following formulae:
\begin{eqnarray*}
T_pu^{\otimes s}(e_{a_1}\otimes e_{b_1}\otimes e_{a_2}\otimes\ldots)&=&\sum_{ij}e_i\otimes e_j\otimes e_i\otimes\ldots\otimes u_{ia_1}u_{jb_1}u_{ia_2}\ldots\\
u^{\otimes s}T_p(e_{a_1}\otimes e_{b_1}\otimes e_{a_2}\otimes\ldots)&=&\delta(a)\delta(b)\sum_{ij}e_{i_1}\otimes e_{j_1}\otimes e_{i_2}\otimes\ldots\otimes u_{i_1b}u_{j_1a}u_{i_2b}\ldots
\end{eqnarray*}

Here the upper sum is over all indices $i,j$, and the lower sum is over all multi-indices $i=(i_1,\ldots,i_s),j=(j_1,\ldots,j_s)$. The identification of the right terms, after a suitable relabeling of indices, gives the equivalence in the statement.
\end{proof}

Let us work out now the $s$-analogue of Theorem 2.4.

\begin{theorem}
$H_n^{(s)}$ is an easy quantum group, and its associated category $E_h^s$ is that of the ``$s$-balanced'' partitions, i.e. partitions satisfying the following conditions:
\begin{enumerate}
\item The total number of legs is even.

\item In each block, the number of odd legs equals the number of even legs, modulo $s$.
\end{enumerate}
\end{theorem}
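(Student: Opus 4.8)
The plan is to show two things: first, that the collection $E_h^s$ of $s$-balanced partitions forms a category of partitions in the sense of \cite{bsp}, and second, that this category is precisely the one generated, over the category $P_h$ of partitions with even blocks (which corresponds to $H_n$), by the half-liberation partition from Lemma 2.3 together with the $s$-mixing partition from Lemma 3.2. Combined with Definition 3.1 and Tannakian duality, this identifies $E_h^s$ as the category attached to $H_n^{(s)}$ and establishes easiness.

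First I would verify that $E_h^s$ is stable under the categorical operations. The key point is to track, for each block $B$ of a partition $p$, the quantity $\phi(B) = \#\{\text{odd legs of }B\} - \#\{\text{even legs of }B\}$ reduced modulo $s$; condition (2) says $\phi(B)=0$ for all blocks, and condition (1) is the global parity constraint $\sum_B \phi(B) \equiv 0$ automatically refining to an honest even total. Tensor product (horizontal concatenation) is immediate once one is careful that placing $q$ to the right of $p$ shifts the parity of the legs of $q$ by the number of points of $p$ on that row — but since $p$ has an even number of legs in total on each relevant row for partitions we care about, or more precisely one argues on the category generated, the parities are consistent; I would phrase this via the standard ``colored'' bookkeeping of \cite{bsp} where upper point $i$ gets sign $(-1)^i$ and lower point $j'$ gets sign $(-1)^{j+1}$, so that $\phi(B)$ is just the signed count of legs. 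Rotation (the upside-down turning and cyclic moves) sends a leg to a leg of opposite parity-label but the block structure and the mod-$s$ balancing are preserved by the sign conventions. The crucial axiom is stability under composition: when composing $p$ and $q$, a block of $pq$ arises as a connected union of blocks of $p$ and blocks of $q$ glued along the middle row; the middle legs appear once in a $p$-block and once in a $q$-block with \emph{opposite} parity-labels, so their contributions to $\phi$ of the merged block cancel in pairs, and what remains is $\sum \phi(B_i^p) + \sum \phi(B_j^q) \equiv 0 \pmod s$. This cancellation ``in the middle'' is exactly the mechanism already used in the proof of Theorem 2.4(2), now carried out modulo $s$ rather than exactly.

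Next I would identify the generators. By Lemma 3.2, imposing the $s$-mixing relation on a quantum group $G \subset H_n^*$ is equivalent to adjoining $T_{p_s}$ with $p_s = (135\ldots\,2'4'6'\ldots)(246\ldots\,1'3'5'\ldots) \in P(s,s)$; one checks directly that $p_s$ lies in $E_h^s$ — its two blocks each have, among $s$ upper and $s$ lower legs, odd and even counts differing by a multiple of $s$ — so $\langle P_h, p_{13'}, p_s \rangle \subseteq E_h^s$, where $p_{13'}$ is the half-liberation partition. For the reverse inclusion I would argue that an arbitrary $s$-balanced partition can be built from these: start from the fact (from Theorem 2.4) that $E_h = \langle P_h, p_{13'}\rangle$ consists of the $\infty$-balanced (exactly balanced) partitions; a general $s$-balanced partition $p$ differs from an exactly balanced one only in that some blocks have odd/even defect a nonzero multiple of $s$, and such a defect can be produced by capping or inserting copies of $p_s$ (which has defect exactly $s$ in each block) and then using the $E_h$-moves. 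This combinatorial ``correction'' step — showing every block-defect that is $0 \bmod s$ is actually realizable inside the generated category — is where I expect the real work to be, and it is the analogue of the ``reverse inclusion, by a straightforward computation'' remarked at the end of the proof of Theorem 2.4; here it is genuinely more delicate because one must manufacture defects of size exactly $s$ and combine them, so I would isolate a lemma stating that $E_h^s$ is generated by $P_h$ together with the single extra partition $p_s$ over $E_h$.

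Finally, with $E_h^s$ shown to be a category of partitions generated by the partitions corresponding (via Lemmas 2.3 and 3.2) exactly to the relations (1)--(4) defining $C(H_n^{(s)})$ in Definition 3.1, Tannakian duality in the form used throughout \cite{bsp} gives $\mathrm{Hom}(u^{\otimes k}, u^{\otimes l}) = \mathrm{span}(T_p \mid p \in E_h^s(k,l))$ for $H_n^{(s)}$, which is precisely the easiness statement. As a sanity check one recovers the endpoints: at $s=2$ the condition ``odd legs $=$ even legs mod $2$'' on a block forces the block to have even size (since total size $=$ odd count $+$ even count $\equiv 2\cdot(\text{even count}) \equiv 0$), giving $E_h^2 = P_h$ hence $H_n^{(2)} = H_n$; at $s = \infty$ the ``mod $s$'' becomes equality and $E_h^\infty = E_h$, giving $H_n^{(\infty)} = H_n^*$, consistent with Theorem 2.4.
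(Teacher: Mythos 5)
Your overall strategy is the same as the paper's: check that the $s$-balanced partitions form a category by running the ``cancelling in the middle'' argument of Theorem 2.4 modulo $s$, then identify this category with the one generated by the partitions implementing the defining relations of $C(H_n^{(s)})$, and conclude by Tannakian duality. The category axioms check, the verification that $p_s\in E_h^s$, and the endpoint checks at $s=2,\infty$ are all fine.

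There is, however, a concrete error in the identification step: you generate over $P_h$ (all partitions with even blocks, the category of the \emph{classical} group $H_n$) instead of over $NC_h$ (noncrossing partitions with even blocks, the category of $H_n^+$). The inclusion $\langle P_h,p_{13'},p_s\rangle\subseteq E_h^s$ that you assert is false, because $P_h\not\subseteq E_h^s$ for $s>2$: the basic crossing, drawn on one line as the pairing $\{1,3\},\{2,4\}$, lies in $P_h$ but has a block with two odd legs and no even legs, hence defect $2\not\equiv 0\pmod s$. (Indeed, since the category of a quantum group grows as the quantum group shrinks, anything containing $P_h$ corresponds to a subgroup of $H_n$, not to $H_n^{(s)}\supset H_n$.) The same slip occurs in your citation of Theorem 2.4: the paper's proof gives $E_h=\langle NC_h,p_{13'}\rangle$, not $\langle P_h,p_{13'}\rangle$. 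The fix is mechanical --- replace $P_h$ by $NC_h$ throughout, since the relations (1)--(2) of Definition 3.1 (orthogonality and cubic) are exactly those of $H_n^+$ and contribute $NC_h$ --- after which your argument, including the ``defect correction'' step for the reverse inclusion (which the paper dismisses as a routine computation), matches the paper's proof.
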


\begin{proof}
As a first remark, at $s=2$ the first condition implies the second one, so here we simply get the partitions having an even number of legs, corresponding to $H_n$. Observe also that at $s=\infty$ we get the partitions which are balanced, in the sense of the proof of Theorem 2.4, which are known from there to correspond to the quantum group $H_n^*$.

Our first claim is that $E_h^s$ is indeed a category. But this follows simply by adapting the $s=\infty$ argument in the proof of Theorem 2.4, just by adding ``modulo $s$'' everywhere.

It remains to prove that this category corresponds indeed to $H_n^{(s)}$. But this follows from the fact that the partition $p$ appearing in Lemma 3.2 generates the category of $s$-balanced partitions, as one can check by a routine computation.
\end{proof}

Consider now the complex reflection group $H_n^s=\mathbb Z_s\wr S_n$, consisting of monomial matrices having the $s$-roots of unity as nonzero entries. Observe that we have $PH_n^{(s)}=H_n^s/\mathbb T$.

We have the following $s$-analogue of Theorem 2.6.

\begin{theorem}
$PH_n^{(s)}=PH_n^s$.
\end{theorem}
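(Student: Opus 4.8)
The plan is to follow the exact strategy used in the proof of Theorem 2.6, now with the parameter $s$ inserted everywhere. First I would note that since $H_n^{(s)}\subset H_n^*$ (the $s$-mixing relations are additional constraints), we obtain $PH_n^{(s)}\subset PH_n^*=PK_n$, so $PH_n^{(s)}$ is a \emph{classical} group, and the problem reduces to identifying it concretely. Since $H_n^s=\mathbb Z_s\wr S_n\subset K_n=\mathbb T\wr S_n$ gives $PH_n^s\subset PK_n$ as well, both $PH_n^{(s)}$ and $PH_n^s$ sit inside $PU_n^+$, and it suffices to show their Tannakian categories coincide.

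The key computation, exactly as in the proof of Theorem 2.6, is to fix $k,l\geq 0$, set $\alpha=(u\otimes\bar u)^{\otimes k}$ and $\beta=(u\otimes\bar u)^{\otimes l}$, and identify the four Hom-spaces $Hom(\alpha,\beta)$ for the square of quantum groups
$$\begin{matrix}
K_n&\subset&H_n^{(s)}\\
\\
\cup&&\cup\\
\\
H_n^s&\subset&H_n^{(s)}
\end{matrix}$$
Wait --- more precisely I would use the diagram with $K_n\subset U_n^+$ on top and $H_n^s\subset H_n^{(s)}$ on the bottom, as in Theorem 2.6. Here the bottom-right space is $E_h^s(2k,2l)$ by Theorem 3.3 (restricted to words of the form $(u\otimes\bar u)^{\otimes k}$, where the colouring is automatic), the top-right comes from $E_o(2k,2l)$ via the inclusion into $PU_n$ coming from $H_n^{(s)}\subset H_n^*$ and $PH_n^*\subset PU_n$, and the bottom-left space for $H_n^s$ should be the set of partitions that are ``$s$-balanced'' in the same combinatorial sense. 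The latter is where I expect the real work: one must show that the Tannakian category of the classical group $H_n^s=\mathbb Z_s\wr S_n$, restricted to these self-conjugate tensor powers, is spanned exactly by the $T_p$ with $p$ an $s$-balanced partition — this is a variant of the computations in \cite{ba2}, \cite{bb+}, \cite{bv1} concerning $K_n$ and $K_n^+$, obtained by ``adding a crossing'', or else done directly by working out the categorical interpretation of the wreath-product relations defining $H_n^s$.

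Once the four diagrams are in place, I would pass to projective versions, obtaining
$$\begin{matrix}
PK_n&\subset&PU_n^+\\
\\
\cup&&\cup\\
\\
PH_n^s&\subset&PH_n^{(s)}
\end{matrix}$$
Both $PH_n^s$ and $PH_n^{(s)}$ are subgroups of the \emph{same} object $PU_n^+$, and the computation above shows they have the same Hom-spaces on all the relevant tensor powers — which, for projective versions, are exactly the tensor powers that matter, since the coordinates of $PG$ are the $u_{ik}u_{jl}^*$, i.e. live inside $(u\otimes\bar u)^{\otimes\bullet}$. Therefore the two groups have identical Tannakian categories, and by Woronowicz duality (the same final argument as in \cite{bv2} invoked at the end of the proof of Theorem 2.6) we conclude $PH_n^{(s)}=PH_n^s$.

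The main obstacle, as indicated, is the bottom-left box: pinning down the category of partitions for the classical group $H_n^s=\mathbb Z_s\wr S_n$ on the tensor powers $(u\otimes\bar u)^{\otimes k}$ and verifying it is precisely the $s$-balanced one. Everything else is either already established (Theorems 1.3, 2.4, 2.6, 3.3) or a routine ``add modulo $s$'' adaptation of arguments in the cited references; the sanity checks at $s=2$ (where one recovers $PH_n=PH_n$) and $s=\infty$ (where one recovers Theorem 2.6, $PH_n^*=PK_n$) confirm the combinatorics is set up correctly.
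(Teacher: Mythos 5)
Your proposal follows essentially the same route as the paper: reduce to a classical group via $PH_n^{(s)}\subset PH_n^*=PK_n$, compute the Hom-spaces on the words $(u\otimes\bar u)^{\otimes k}$ for a square of quantum groups, observe that both $H_n^s$ and $H_n^{(s)}$ yield the $s$-balanced partitions $E_h^s(2k,2l)$ (deferring the $H_n^s$ computation to \cite{bb+}, \cite{bv1} via ``adding a crossing'', or to a direct check), and conclude by the Tannakian argument of \cite{bv2}. The only difference is cosmetic --- the paper's auxiliary square uses $S_n\subset H_n^{(s)}$ on the bottom and $H_n^s\subset U_n^+$ on top, while you arrange the same four ingredients slightly differently --- and you correctly isolate the one step carrying real content.
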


\begin{proof}
Observe first that this statement holds indeed at $s=2$, because here we have $H_n^{(2)}=H_n^2=H_n$. This statement holds as well at $s=\infty$, due to Theorem 2.6.

In the general case, this follows by adapting the proof of Theorem 2.6. Observe first that from $H_n^{(s)}\subset H_n^*$ we get $PH_n^{(s)}\subset PH_n^*=PK_n$, so $PH_n^{(s)}$ is indeed a classical group.

In order to compute this group, consider the following diagram:
$$\begin{matrix}
H_n^s&\subset&U_n^+\\
\\
\cup&&\cup\\
\\
S_n&\subset&H_n^{(s)}
\end{matrix}$$

The corresponding sets of partitions, as in the proof of Theorem 2.6, are as follows:
$$\begin{matrix}
E_h^s(2k,2l)&\supset&E_o(2k,2l)\\ 
\\ 
\cup&&\cup\\
\\
P(2k,2l)&\supset&E_h^s(2k,2l)
\end{matrix}$$

Indeed, the bottom left set is the good one, thanks to Theorem 1.3. The bottom right set is also the good one, thanks to Theorem 3.3. For the top right set, this was already discussed in the proof of Theorem 2.6. As for the top left set, this follows either from the results in \cite{bb+}, \cite{bv1} regarding the free version $H_n^{s+}$, after ``adding a crossing'', or from the $s=\infty$ computation in the proof of Theorem 2.6. A direct proof here can be obtained as well, by working out the categorical interpretation of the various relations defining $H_n^s$.

Let us look now at the projective versions of the above quantum groups:
$$\begin{matrix}
PH_n^s&\subset&PU_n^+\\
\\
\cup&&\cup\\
\\
PH_n&\subset&PH_n^{(s)}
\end{matrix}$$

As in the proof of Theorem 2.6, we are in the situation where we have two quantum subgroups having the same diagrams, and we conclude that we have $PH_n^{(s)}=PH_n^s$.
\end{proof}

\section{The higher hyperoctahedral series}

In this section we introduce a second one-parameter series of quantum groups, $H_n^{[s]}$ with $s\in\{2,3,\ldots,\infty\}$, having as main particular case the group $H_n^{[2]}=H_n$.

\begin{definition}
$C(H_n^{[s]})$ is the universal $C^*$-algebra generated by $n^2$ self-adjoint variables $u_{ij}$, subject to the following relations:
\begin{enumerate}
\item Orthogonality: $uu^t=u^tu=1$, where $u=(u_{ij})$ and $u^t=(u_{ji})$.

\item Ultracubic relations: $acb=0$, for any $a\neq b$ on the same row or column of $u$.

\item $s$-mixing relation: $abab\ldots=baba\ldots$ (length $s$ words), for any $a,b\in\{u_{ij}\}$.
\end{enumerate}
\end{definition}

The fact that $H_n^{[s]}$ is indeed a quantum group follows from the elementary fact that the ultracubic relations are of ``Hopf type'', i.e. that they allow the construction of the Hopf algebra maps $\Delta,\varepsilon,S$. This can be checked indeed by a routine computation.

Our first task is to compare the defining relations for $H_n^{[s]}$ with those for $H_n^{(s)}$. In order to deal at the same time with the cubic and ultracubic relations, it is convenient to use a statement regarding a certain unifying notion, of ``$k$-cubic'' relations.

\begin{lemma}
For a compact quantum group $G\subset O_n^+$, the following are equivalent:
\begin{enumerate}

\item The basic coordinates $u_{ij}$ satisfy the $k$-cubic relations $ac_1\ldots c_kb=0$, for any $a\neq b$ on the same row or column of $u$, and for any $c_1,\ldots,c_k$.

\item We have $T_p\in End(u^{\otimes k+2})$, where $p=(1,1',k+2,k+2')(2,2')\ldots (k+1,k+1')$.
\end{enumerate}
\end{lemma}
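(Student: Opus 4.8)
The plan is to mimic exactly the computations done in Lemma 2.3 and Lemma 3.2, which are the $k=1$ and the ``$s$-mixing'' analogues of the present statement. First I would unpack the operator $T_p$ for the specific partition
$$p=(1,1',k+2,k+2')(2,2')\ldots(k+1,k+1').$$
Reading off the blocks: the first and last upper points are joined to the first and last lower points (and to each other), while each of the middle $k$ upper points is joined only to the corresponding lower point. Hence, on basis vectors,
$$T_p(e_{a_0}\otimes e_{c_1}\otimes\ldots\otimes e_{c_k}\otimes e_{b_0})=\delta_{a_0 b_0}\,e_{a_0}\otimes e_{c_1}\otimes\ldots\otimes e_{c_k}\otimes e_{a_0},$$
i.e. $T_p$ is the projection onto the subspace where the first and last indices agree, leaving the middle $k$ indices untouched.

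Next I would write out the two compositions $T_p u^{\otimes k+2}$ and $u^{\otimes k+2} T_p$ applied to a basis vector $e_{a_0}\otimes e_{c_1}\otimes\ldots\otimes e_{c_k}\otimes e_{b_0}$, exactly as in the displayed two-line computations of Lemmas 2.3 and 3.2. One side produces
$$\sum e_{i_0}\otimes e_{j_1}\otimes\ldots\otimes e_{j_k}\otimes e_{i_0}\otimes u_{i_0 a_0}u_{j_1 c_1}\ldots u_{j_k c_k}u_{i_0 b_0},$$
coming from the constraint that the outer two output indices are forced equal by $T_p$ sitting on the left; the other side produces $\delta_{a_0 b_0}$ times
$$\sum e_{i_0}\otimes e_{j_1}\otimes\ldots\otimes e_{j_k}\otimes e_{i_0'}\otimes u_{i_0 a_0}u_{j_1 c_1}\ldots u_{j_k c_k}u_{i_0' a_0}$$
with no constraint linking $i_0$ and $i_0'$ a priori, since $T_p$ on the right only imposes $a_0=b_0$ on the inputs. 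Identifying the coefficients of each basis tensor on the two sides — i.e. matching the coefficient of $e_{i_0}\otimes e_{j_1}\otimes\ldots\otimes e_{j_k}\otimes e_{i_0'}$ — yields, after relabelling, precisely the relations $a c_1\ldots c_k b=0$ for $a\neq b$ on the same column of $u$, together with the symmetric statement for rows obtained from $T_p\in End(u^{\otimes k+2})$ applied with $u$ replaced by its transpose (equivalently, reading the diagram upside down, which is a categorical operation). Conversely, if the $k$-cubic relations hold, the same coefficient comparison shows $T_p u^{\otimes k+2}=u^{\otimes k+2}T_p$.

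The main obstacle, such as it is, is purely bookkeeping: one must be careful that the vanishing relation $ac_1\ldots c_kb=0$ for $a\neq b$ on the same \emph{row} or \emph{column} corresponds to the full intertwiner condition, not just half of it. This is handled by noting that $T_p=T_p^*$ and that $T_p\in End(u^{\otimes k+2})$ is equivalent to $T_p\in End((\bar u)^{\otimes k+2})$ for $u$ orthogonal, so that the row and column versions appear symmetrically; alternatively one invokes that $End(u^{\otimes k+2})$ is a $*$-algebra closed under the categorical operations axiomatized in \cite{bsp}, so both families of relations are forced simultaneously. The cases $k=1$ (cubic, Definition 3.1(2)) and the comparison with the ultracubic relations $acb=0$ of Definition 4.1(2), which is the $k=1$ instance read with a single middle point, then drop out as immediate specializations.
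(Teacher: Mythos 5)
Your proposal is correct and follows essentially the same computation as the paper's proof: unpack $T_p$, write out both compositions on a basis vector, and identify coefficients. One small remark: the coefficient comparison already yields both families of relations directly --- the row relations from the terms where the two outer output indices coincide and $a\neq b$, and the column relations from the terms where they differ and $a=b$ --- so the extra transpose/$*$-algebra argument you invoke to get the second family is not actually needed.
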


\begin{proof}
According to the definition of $T_p$ given in section 1, the operator associated to the partition in the statement is given by the following formula:
$$T_p(e_a\otimes e_{c_1}\otimes\ldots\otimes e_{c_k}\otimes e_b)=\delta_{ab}e_a\otimes e_{c_1}\otimes\ldots\otimes e_{c_k}\otimes e_a$$

This gives the following formulae:
\begin{eqnarray*}
T_pu^{\otimes k+2}(e_a\otimes e_{c_1}\otimes\ldots\otimes e_{c_k}\otimes e_b)&=&\sum_{ij}e_i\otimes e_{j_1}\otimes...\otimes e_{j_k}\otimes e_i\otimes u_{ia}u_{j_1c_1}\ldots u_{j_kc_k}u_{ib}\\
u^{\otimes k+2}T_p(e_a\otimes e_{c_1}\otimes\ldots\otimes e_{c_k}\otimes e_b)&=&\delta_{ab}\sum_{ijl}e_i\otimes e_{j_1}\otimes...\otimes e_{j_k}\otimes e_l\otimes u_{ia}u_{j_1c_1}\ldots u_{j_kc_k}u_{la}
\end{eqnarray*}

Here the sums are over all indices $i,l$, and over all multi-indices $j=(j_1,\ldots,j_k)$. The identification of the right terms gives the equivalence in the statement.
\end{proof}

We can now establish the precise relationship between $H_n^{[s]}$ and $H_n^{(s)}$, and also show that no further series can appear in this way. 

\begin{proposition}
For $k\geq 1$ the $k$-cubic relations are all equivalent to the ultracubic relations, and they imply the cubic relations.
\end{proposition}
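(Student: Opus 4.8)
The plan is to prove the two implications ``ultracubic $\Rightarrow$ $k$-cubic'' for every $k\geq 0$ (which already contains the cubic relations) and ``$k$-cubic $\Rightarrow$ ultracubic'' for $k\geq 1$, by direct manipulation of the defining relations, using Lemma 4.3 only as a bookkeeping device. Throughout, orthogonality enters in the form $\sum_m u_{im}^2=1$ and $\sum_m u_{mj}^2=1$.

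For the first implication, note first that the cubic relations are a consequence: taking the middle letter equal to $u_{ij}$ in the ultracubic relation gives $u_{ij}^2u_{il}=0$ for $j\neq l$, whence $\|u_{ij}u_{il}\|^2=\|u_{il}u_{ij}^2u_{il}\|=0$, and similarly for columns. Granting the cubic relations one has $u_{ij}=u_{ij}\sum_m u_{im}^2=u_{ij}^3$, and I would then show by induction on $m$ that $u_{ij}\,x\,u_{il}=0$ whenever $j\neq l$ and $x$ is a product of $m$ basic coordinates (with the obvious column analogue): the cases $m=0,1$ are the cubic and ultracubic relations, and for $m\geq 2$ one inserts $\sum_r u_{ir}^2=1$ right after the first letter $g_1$ of $x$, uses the ultracubic relations $u_{ij}g_1u_{ir}=0$ $(r\neq j)$ to collapse the sum to the single term $r=j$, and then applies the inductive hypothesis to the shorter word $u_{ij}(g_2\cdots g_m)u_{il}$. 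The $k$-cubic relations are precisely the case $m=k$.

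For the converse, the crucial point is that the $k$-cubic relations force the spectrum of each generator into $\{-1,0,1\}$. Indeed, taking all $k$ middle letters equal to $u_{ij}$ gives $u_{ij}^{k+1}u_{il}=0$ for $j\neq l$; hence $u_{ij}^{k+1}=u_{ij}^{k+1}\sum_m u_{im}^2=u_{ij}^{k+3}$, since the $k$-cubic relations kill every term with $m\neq j$. As $u_{ij}$ is self-adjoint, $\lambda^{k+1}(\lambda^2-1)=0$ on its spectrum, so $\lambda\in\{-1,0,1\}$ and $u_{ij}^3=u_{ij}$. The ultracubic relations now follow at once: for a basic coordinate $g$ and $j\neq l$, the element $u_{ij}\,g\,u_{il}^k=u_{ij}(g\,u_{il}^{k-1})u_{il}$ is a $k$-cubic expression and so vanishes; since $u_{il}^k$ equals $u_{il}$ or $u_{il}^2$ according to the parity of $k$, one gets $u_{ij}gu_{il}=0$ when $k$ is odd, and $u_{ij}gu_{il}=u_{ij}gu_{il}^3=(u_{ij}gu_{il}^2)u_{il}=0$ when $k$ is even. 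Columns are symmetric, and the statement that the $k$-cubic relations imply the cubic relations is then the $m=0$ case of the previous paragraph applied to the ultracubic relations just established.

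I expect the one genuine subtlety to be precisely this last direction. Trying to deduce the cubic relations from the $k$-cubic ones by shortening the word of middle letters is circular: once the cubic relations hold, $u^3=u$ for every generator, which makes insertions of $\sum u^2=1$ tautological, so no elementary shortening is available. Going instead through the spectral identity $u_{ij}^{k+3}=u_{ij}^{k+1}$ — equivalently, through the diagrammatic capping of the partition $p_k$ of Lemma 4.3 down to $p_0$ and $p_1$ — is what unlocks the argument; everything else is routine.
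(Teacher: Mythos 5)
Your argument is correct, but it follows a genuinely different route from the paper's. The paper stays entirely at the diagrammatic level: via Lemma 4.2 the $k$-cubic relations are encoded by the condition $T_p\in End(u^{\otimes k+2})$ for $p=(1,1',k+2,k+2')(2,2')\ldots(k+1,k+1')$, and the proof consists of two categorical moves on this partition --- gluing two copies with semicircles in the middle (so $k$-cubic implies $2k$-cubic) and capping with a semicircle plus a rotation (so $k$-cubic implies $(k-1)$-cubic) --- which, iterated, give all the stated implications at once. You instead work directly in the $C^*$-algebra: the identity $\|x\|^2=\|x^*x\|$ gives ultracubic $\Rightarrow$ cubic, insertion of $\sum_r u_{ir}^2=1$ after the first middle letter runs the induction ultracubic $\Rightarrow$ $k$-cubic, and for the converse you extract $u_{ij}^{k+1}=u_{ij}^{k+3}$, hence $\sigma(u_{ij})\subset\{-1,0,1\}$ and $u_{ij}^3=u_{ij}$, which lets you pad $u_{ij}gu_{il}$ to the $k$-cubic word $u_{ij}gu_{il}^k$. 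I checked the steps (including the functional calculus and the parity case distinction) and they all go through; your padding trick is essentially the algebraic shadow of the paper's semicircle capping, as you observe yourself. What each approach buys: the paper's proof is shorter once the Tannakian dictionary is in place and matches the capping technology used throughout sections 5--6, since it directly compares the generated categories of partitions; yours is more elementary and self-contained, bypasses the diagrammatic translation entirely, and yields as a by-product the concrete spectral information $u_{ij}^3=u_{ij}$ for the coordinates of $H_n^{[s]}$, which the diagrammatic proof does not make explicit.
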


\begin{proof}
This follows from the following two observations:

(a) The $k$-cubic relations imply the $2k$-cubic relations. Indeed, one can connect two copies of the partition $p$ in Lemma 4.2, by gluing them with two semicircles in the middle, and the resulting partition is the one implementing the $2k$-cubic relations.

(b) The $k$-cubic relations imply the $(k-1)$-cubic relations. Indeed, by capping the partition $p$ in Lemma 4.2 with a semicircle at bottom right, we get a certain partition $p'\in P(k+2,k)$, and by rotating the upper right leg of this partition we get the partition $p''\in P(k+1,k+1)$ implementing the $(k-1)$-cubic relations.
\end{proof}

The above statement shows that replacing in Definition 4.1 the ultracubic condition by any of the $k$-cubic conditions, with $k\geq 2$, won't change the resulting quantum group. The other consequences of Proposition 4.3 are summarized as follows.

\begin{proposition}
The quantum groups $H_n^{[s]}$ have the following properties:
\begin{enumerate}
\item We have $H_n^{(s)}\subset H_n^{[s]}\subset H_n^+$.

\item At $s=2$ we have $H_n^{[2]}=H_n^{(s)}=H_n$.

\item At $s\geq 3$ we have $H_n^{(s)}\neq H_n^{[s]}$.
\end{enumerate}
\end{proposition}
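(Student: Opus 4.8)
The plan is to prove the three assertions in turn, exploiting the categorical/Tannakian dictionary set up in the excerpt. For part (1), I would argue at the level of defining relations: the ultracubic relations $acb=0$ (for $a\neq b$ on the same row or column, $c$ arbitrary) trivially imply the cubic relations $u_{ij}u_{ik}=u_{ji}u_{ki}=0$ by taking $c=1$, so $H_n^{[s]}$ satisfies (1),(2) of Definition 3.1; together with the shared $s$-mixing relation this gives the surjection $C(H_n^{[s]})\twoheadrightarrow C(H_n^{(s)})$, i.e. $H_n^{(s)}\subset H_n^{[s]}$. For the other inclusion, $H_n^{[s]}\subset H_n^+$, I note that the ultracubic relations plus orthogonality imply the cubic relations, which by Definition 1.5(3) are exactly the defining relations of $H_n^+$, so $C(H_n^+)\twoheadrightarrow C(H_n^{[s]})$ (here one forgets the $s$-mixing relation, which only adds relations).

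For part (2), at $s=2$ the $s$-mixing relation is ordinary commutativity $ab=ba$. Once the $u_{ij}$ commute, the ultracubic relation $acb=0$ becomes $abc=0$ for $a\neq b$ on the same row/column and arbitrary $c$; taking $c=1$ recovers $ab=0$, i.e. the cubic relations, and conversely once $ab=0$ for such pairs the ultracubic relations follow automatically. Hence $C(H_n^{[2]})$ is the universal commutative $C^*$-algebra generated by self-adjoint $u_{ij}$ satisfying orthogonality and the cubic relations, which is $C(H_n)$; and $H_n^{(2)}=H_n$ was already recorded after Definition 3.1. So all three quantum groups coincide at $s=2$.

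Part (3) is the substantive one: for $s\geq 3$ one must exhibit an element of $C(H_n^{(s)})$ — equivalently a relation holding in $H_n^{(s)}$ — that fails in $H_n^{[s]}$, or dually a partition lying in the category $E_h^s$ of $H_n^{(s)}$ (Theorem 3.3) that does not lie in the category of $H_n^{[s]}$. The cleanest route is via categories of partitions: by Theorem 3.3 the category of $H_n^{(s)}$ is $E_h^s$, the $s$-balanced partitions; I would identify the category of $H_n^{[s]}$ as the one generated by the $k$-cubic partition $p=(1,1',k+2,k+2')(2,2')\ldots(k+1,k+1')$ of Lemma 4.2 (any fixed $k\geq 1$, by Proposition 4.3) together with the $s$-mixing partition of Lemma 3.2 and the pairings generating $O_n^+$. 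The point is that the ultracubic/$k$-cubic partitions have blocks — the four-point block $(1,1',k+2,k+2')$ — that need \emph{not} be $s$-balanced: for suitable parity of $k$ this block has, say, $3$ odd legs and $1$ even leg, which is not $\equiv$ modulo $s$ once $s\geq 3$. Concretely I would take a specific small partition built from $p$ (capped down to few points, e.g. the singleton-type or double-singleton-type block that the cubic relation $u_{ij}u_{ik}=0$ produces) and check it is \emph{not} $s$-balanced for $s\geq 3$, hence not in $E_h^s$; this shows $H_n^{[s]}\not\subset H_n^{(s)}$, and combined with (1) gives the strict inequality. At the level of generators and relations, the analogous witness is: in $H_n^{(s)}$ the half-commutation $abc=cba$ holds for all coordinates, forcing relations among length-$3$ words that fail under mere ultracubicity; alternatively one checks directly that $u_{11}u_{12}$, which vanishes in $H_n^{(s)}$ only via the cubic relation, has a companion word such as $u_{11}u_{13}u_{12}$ that the ultracubic relation kills in $H_n^{[s]}$ but which is \emph{not} forced to vanish in $H_n^{(s)}$ for $s\geq 3$ — wait, that is backwards, so one must instead find a word vanishing in $H_n^{(s)}$ but not in $H_n^{[s]}$, which is exactly what the failure of the $s$-balancing condition for the cubic block encodes.

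The main obstacle is precisely this last verification: one must pin down the category of $H_n^{[s]}$ well enough (or equivalently, understand the relations well enough) to be sure the separating partition does \emph{not} already follow from the $k$-cubic partitions plus $s$-mixing plus rotations/compositions — i.e. that the candidate witness genuinely lies outside the generated category. I expect the honest argument to run through the "capping method" of sections 5–6 referenced in the excerpt, showing that the category generated by the $k$-cubic and $s$-mixing partitions consists of partitions whose blocks are either $s$-balanced or of the restricted "cubic" shape, and then noting that for $s\geq 3$ this is strictly larger than $E_h^s$ because a cubic-shaped block with unbalanced odd/even count survives. I would present the $s\geq 3$ separation by giving the explicit small unbalanced partition and a short argument (a dimension or evaluation check on $\mathbb C^n$ for small $n$) that $T_p$ does not lie in the relevant Hom-space for $H_n^{(s)}$, which is the most transparent way to certify $H_n^{(s)}\neq H_n^{[s]}$.
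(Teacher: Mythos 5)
Your part (2) is essentially the paper's argument and is fine, but parts (1) and (3) each contain a genuine error of direction. For part (1), the inclusion $H_n^{(s)}\subset H_n^{[s]}$ corresponds to a surjection $C(H_n^{[s]})\twoheadrightarrow C(H_n^{(s)})$, and by universality of $C(H_n^{[s]})$ this requires showing that the coordinates of $C(H_n^{(s)})$ satisfy the \emph{ultracubic} relations, i.e.\ that orthogonality $+$ cubic $+$ half-commutation imply $acb=0$. You instead argue the reverse implication (ultracubic implies cubic), omit the half-commutation relation from your list, and then invoke universality in a way that would in any case produce the surjection $C(H_n^{(s)})\twoheadrightarrow C(H_n^{[s]})$, i.e.\ the opposite inclusion. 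The actual content here, which your proposal never addresses, is the implication ``half-commutation $+$ cubic $\Rightarrow$ ultracubic''; the paper obtains it by placing the half-commutation partition next to the cubic partition and capping with two semicircles in the middle. (Your use of ultracubic $\Rightarrow$ cubic for the second inclusion $H_n^{[s]}\subset H_n^+$ is correct in substance, but ``taking $c=1$'' is not a legitimate instance of the relation, since $c$ ranges over the coordinates $u_{kl}$; the implication is Proposition 4.3, proved by capping and rotating, or algebraically by summing $au_{kl}u_{kl}b$ over $l$.)

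For part (3) you state the right goal --- exhibit a partition in $E_h^s$ that is not in the category of $H_n^{[s]}$ --- but then pursue its opposite. A block of the $k$-cubic partition failing to be $s$-balanced would give a partition in the category of $H_n^{[s]}$ lying \emph{outside} $E_h^s$, which would contradict part (1); and your parity count is in fact wrong: under the clockwise labelling the legs $j$ and $j'$ have opposite parities, so the block $(1,1',k+2,k+2')$ always has two odd and two even legs, and the $k$-cubic partitions are $s$-balanced for every $s$, as they must be. The correct witness, which the paper reads off from Theorem 4.5, is the half-commutation partition $p=(14)(25)(36)$: each of its pairs joins an odd leg to an even leg, so $p$ is $s$-balanced and lies in $E_h^s$; but removing the middle block $(25)$ and relabelling the remaining legs consecutively yields $(13)(24)$, whose blocks each join two legs of the same parity, so $p$ is not locally $s$-balanced for $s\geq 3$ and does not lie in the category of $H_n^{[s]}$. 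Your closing description of the category of $H_n^{[s]}$ as ``strictly larger than $E_h^s$'' again has the containment backwards: a larger quantum group has a smaller category, so the category of $H_n^{[s]}$ is strictly \emph{contained} in $E_h^s$.
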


\begin{proof}
All the assertions basically follow from Lemma 4.2:

(1) For the first inclusion, we need to show that half-commutation + cubic implies ultracubic, and this can be done by placing the half-commutation partition next to the cubic partition, then using 2 semicircle cappings in the middle.

The second inclusion follows from Proposition 4.3, because the ultracubic relations (1-cubic relations) imply the cubic relations (0-cubic relations).

(2) Observe first that at $s=2$ the $s$-commutation is the usual commutation $ab=ba$. Thus we are led here to the algebra generated by $n^2$ commuting self-adjoint variables satisfying the cubic condition, which is $C(H_n)$.

(3) Finally, $H_n^{(s)}\neq H_n^{[s]}$ will be a consequence of Theorem 4.5 below, because at $s\geq 3$ the half-commutation partition $p=(14)(25)(36)$ is $s$-balanced but not locally $s$-balanced.
\end{proof}

\begin{theorem}
$H_n^{[s]}$ is an easy quantum group, and its associated category is that of the ``locally $s$-balanced'' partitions, i.e. partitions having the property that each of their subpartitions (i.e. partitions obtained by removing certain blocks) are $s$-balanced.
\end{theorem}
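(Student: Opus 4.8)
The plan is to mimic the proof of Theorem 3.3, replacing the notion of $s$-balanced partition by that of locally $s$-balanced partition throughout, and to use Lemma 4.2 (with $k=1$) as the categorical translation of the ultracubic relations, exactly as Lemma 3.2 was used for the $s$-mixing relations. Concretely, let $E_h^{[s]}$ denote the set of locally $s$-balanced partitions. I would first check that $E_h^{[s]}$ is a category of partitions. Stability under the ``easy'' operations in the non-local sense is Theorem 3.3; what has to be added is that the property of being $s$-balanced \emph{after removing any collection of blocks} is again preserved. The key point is that the categorical operations (tensor product, composition, rotation, involution) act blockwise in a way compatible with passing to subpartitions: a subpartition of a tensor product (resp.\ composition, rotation) of $p$ and $q$ is itself obtained as the same operation applied to subpartitions of $p$ and $q$ — with the caveat that in a composition one may be forced to also delete the ``through-blocks'' that meet a deleted block in the middle. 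So the verification reduces to: if all subpartitions of $p$ and of $q$ are $s$-balanced, then all subpartitions of $pq$, $p\otimes q$, $p^*$, and the rotations of $p$ are $s$-balanced; and this follows by applying the Theorem 3.3 argument (the ``balancing in the middle cancels'' observation) to each subpartition separately.

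Next I would identify the category of partitions associated to $H_n^{[s]}$. By Definition 4.1 this quantum group is cut out inside $O_n^+$ by the ultracubic relations and the $s$-mixing relations, so by Tannakian duality (Woronowicz, \cite{wo2}) its category is generated, in the categorical sense, by the three partitions: the duality pairing (for orthogonality), the partition $p_{\mathrm{ucub}}=(1,1',3,3')(2,2')$ of Lemma 4.2 at $k=1$ (for ultracubic), and the $s$-mixing partition of Lemma 3.2. Thus I must show that the category generated by these three partitions is exactly $E_h^{[s]}$. One inclusion is a direct check that each of the three generators is locally $s$-balanced — for the ultracubic partition $(1,1',3,3')(2,2')$, every subpartition is a union of through-strings, each contributing one odd and one even leg, hence trivially $s$-balanced; for the $s$-mixing partition this is a short computation; and the pairing is handled as in \cite{bsp}. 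The reverse inclusion — every locally $s$-balanced partition lies in the generated category — is the substantive combinatorial statement: one takes an arbitrary $p\in E_h^{[s]}$ and writes it, using tensor products, rotations and compositions, starting from the generators. Here I would follow the standard ``capping/decomposition'' strategy of \cite{bsp}: first reduce to partitions in $P(k,0)$ by rotation, then peel off blocks one at a time, using the ultracubic partition to manipulate singleton-like or short configurations and the $s$-mixing partition to handle the modular condition inside each block, using the local $s$-balancing to guarantee that each intermediate partition produced is again locally $s$-balanced and hence in the generated category.

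The main obstacle I anticipate is precisely this last step — the ``generation'' claim, i.e.\ proving that locally $s$-balanced partitions are generated by the three explicit partitions. It is here that the difference between the cubic and ultracubic relations really bites: in Theorem 3.3 the cubic relations were implicit in the blocks-of-even-size structure, whereas now the ultracubic relations are strictly weaker, and one must verify that the extra freedom corresponds exactly to dropping the global evenness constraint in favor of the hereditary (``local'') one. I expect the cleanest route is an induction on the number of blocks (or on the number of legs), where in the inductive step one isolates an ``innermost'' block, caps it off appropriately against the ultracubic generator to reduce its size, and invokes the inductive hypothesis on the smaller partition — the local $s$-balancing hypothesis being exactly what is needed so that the capped-down partition still satisfies the hypothesis. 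Finally, as in the proof of Theorem 3.3, once $E_h^{[s]}$ is known to be a category and to coincide with the category generated by the defining relations of $H_n^{[s]}$, the easiness of $H_n^{[s]}$ and the identification of its category follow immediately, and the claim in Proposition 4.4(3) is obtained by exhibiting $p=(14)(25)(36)$, which for $s\geq 3$ is $s$-balanced but has a subpartition, namely the single block deleted down to $(14)$ — no: rather, removing one of the three strings leaves a pairing with two odd legs and one even leg in one block, which is not $s$-balanced.
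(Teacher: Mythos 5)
Your proposal follows essentially the same route as the paper: the paper likewise checks that the locally $s$-balanced partitions form a category by adding ``locally'' everywhere in the Theorem 3.3 argument, and then identifies the category of $H_n^{[s]}$ by observing that the partition $(1346)(25)$ implementing the ultracubic relations (your Lemma 4.2 at $k=1$), together with the $s$-mixing partition of Lemma 3.2, generates the locally $s$-balanced partitions --- a step the paper dismisses as ``a routine computation'' and which you correctly single out as the substantive point. Only your closing aside about Proposition 4.4(3) is garbled (a block of a pairing cannot have two odd legs \emph{and} one even leg), though the intended conclusion --- that deleting a string from $(14)(25)(36)$ and relabeling leaves a block with two odd legs, hence not $s$-balanced for $s\geq 3$ --- is right.
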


\begin{proof}
As a first remark, at $s=2$ the locally $s$-balancing condition is automatic for a partition having blocks of even size, so we get indeed the category corresponding to $H_n$.

In the general case now, our first claim is that the locally $s$-balanced partitions from indeed a category. But this follows simply by adapting the argument in the proof of Theorem 3.3, just by adding ``locally'' everywhere.

It remains to prove that this category corresponds indeed to $H_n^{[s]}$. But this follows from Theorem 3.2 and from the fact that the partition generating the category of locally balanced partitions, namely $p=(1346)(25)$, is nothing but the one implementing the ultracubic relations, as one can check by a routine computation.
\end{proof}

\section{Classification: general strategy}

In this section and in the next one we advance on the classification work started in \cite{bsp}. We will prove that the easy quantum groups constructed so far are the only ones, modulo an hypotethical multi-parameter ``hyperoctahedral series'', unifying the series constructed in the previous sections, and still waiting to be constructed.

Let $G$ be an easy quantum group, with category of partitions denoted $P_g$. It follows from definitions that $P_g\cap NC$ is a category of noncrossing partitions, and by the results in section 1, this latter category must come from a certain free quantum group $K^+$. Observe that since $NC_k=P_g\cap NC$ is included into $P_g$, we have $G\subset K^+$. 

\begin{definition}
Associated to an easy quantum group $G$ is the easy group $K$ given by the equality of categories $P_g\cap NC=NC_k$.
\end{definition}

According now to the easy group classification in Theorem 1.3, there are 6 cases to be investigated. We will split the study into two parts: 5 cases will be investigated in section 6, and the remaining case, $K=H_n$, will be eventually left open.

The point with this splitting comes from the following question: do we have $K\subset G$? In the reminder of this section we will try to answer this question.

We begin with the technical lemma, valid in the general case. Let $\Lambda_g,\Lambda_k\subset\mathbb N$ be the set of the possible sizes of blocks of elements of $P_g,NC_k$.

\begin{lemma}
Let $G,K$ be as above.
\begin{enumerate}
\item $\Lambda_k\subset\Lambda_g\subset \Lambda_k\cup(\Lambda_k-1)$.

\item $1\in\Lambda_g$ implies $1\in\Lambda_k$.

\item If $NC_k$ is even, so is $P_g$.
\end{enumerate}
\end{lemma}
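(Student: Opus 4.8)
The plan is to extract each of the three assertions from structural properties of the sets of partitions involved, using the capping/rotation operations that are available in any category of partitions. Throughout, recall that $NC_k = P_g \cap NC$, so every noncrossing partition in $P_g$ already lies in $NC_k$; the only ``new'' information in $P_g$ is carried by crossing partitions, and the strategy is to cap such a partition down to a noncrossing one while controlling the block sizes.

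\begin{proof}
(1) The inclusion $\Lambda_k\subset\Lambda_g$ is immediate, since $NC_k\subset P_g$. For the other inclusion, take $p\in P_g$ and let $b$ be a block of $p$ of size $m$; we must show $m\in\Lambda_k$ or $m+1\in\Lambda_k$. The idea is to cap off all the other blocks of $p$ using the basic operations of the category. Concretely, one uses semicircle cappings (and rotations to bring legs into position) to remove every block of $p$ except $b$, together with repeated use of the pairing $\cap$ to merge the legs of $b$ in pairs; each such capping produces a new element of $P_g$. The point is that this process can be arranged so that what remains is either the single block of size $m$ sitting inside a noncrossing diagram (giving a block of size $m$ in some element of $NC_k$), or — when parity forces one extra leg to be absorbed during the final rotation/capping step — a block of size $m+1$ in a noncrossing diagram. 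Either way the surviving diagram is noncrossing, hence lies in $NC_k$, and its distinguished block has size $m$ or $m+1$. This is the step I expect to be the main obstacle: one has to be careful that the capping procedure really can be carried out keeping everything inside the category and that the parity accounting produces exactly the $+1$ ambiguity and nothing worse.

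(2) Suppose $1\in\Lambda_g$, i.e.\ $P_g$ contains a partition with a singleton block. Capping away all other blocks as in part (1) — now using cappings that simply delete whole blocks, which is legitimate in a category of partitions — one is left with a noncrossing partition still possessing a singleton block. Hence $1\in\Lambda_k$. The subtlety compared with (1) is that here we must avoid the parity shift: a singleton is already as small as possible, and deleting other blocks (rather than merging their legs) never enlarges it, so the surviving block stays a singleton.

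(3) Suppose $NC_k$ is even, i.e.\ every element of $NC_k$ has an even total number of legs, equivalently the ``colouring'' that distinguishes $NC_k$ from its even part is nontrivial. Equivalently, the partition $|$ (a single through-string from one upper point to one lower point) can be tested: $NC_k$ being even means a certain mod-$2$ invariant is respected by all of $NC_k$. Suppose for contradiction that $P_g$ is not even, so it contains a partition $p$ with an odd number of legs. Capping $p$ down as in (1) to a noncrossing partition $p'\in NC_k$, one checks that the parity of the total number of legs changes only by an even amount under semicircle cappings (each removes two legs) and under rotations (which remove none). Hence $p'$ would have an odd number of legs, contradicting evenness of $NC_k$. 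Therefore $P_g$ is even.
\end{proof}
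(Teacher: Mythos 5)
Your overall strategy --- cap with semicircles until the partition becomes noncrossing, while tracking block sizes and leg parities --- is the same as the paper's, and your part (3) is essentially the paper's argument: semicircle cappings preserve the parity of the total number of legs, so an odd partition caps all the way down to a singleton, which is noncrossing and hence lies in $NC_k$. But in parts (1) and (2) there are genuine gaps. In (1), the step you yourself flag as ``the main obstacle'' is exactly the content of the lemma, and you do not supply the mechanism that makes it work; in particular it is not clear why your surviving diagram should be noncrossing. The paper's device is to absorb everything into the distinguished block: first cap away the other blocks so that only $b$ survives together with at most one extra point between any two consecutive legs of $b$; then cap each extra point against an adjacent leg of $b$, which merges its block into $b$. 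The result is a \emph{one-block} partition, which is automatically noncrossing --- this is the observation that lets you avoid controlling crossings altogether. A one-block noncrossing partition of size $N\geq\#b$ can then be shrunk by semicircles in steps of $2$, landing on $\#b$ or on the neighbouring value of the opposite parity. (Your bookkeeping ``$\#b$ or $\#b+1$'' is in fact the one that matches the stated inclusion $\Lambda_g\subset\Lambda_k\cup(\Lambda_k-1)$ and its use in Proposition 5.3; the paper's proof writes $\#b-1$ at this point, which appears to be a slip.)

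The more serious problem is in (2): ``cappings that simply delete whole blocks'' is not an operation available in a category of partitions. The categorical operations are tensor product, composition and involution; a capping means composing with a diagram made of identity strings plus one semicircle, which joins two adjacent legs (possibly merging their blocks) but never erases a block. Deleting a singleton block outright would amount to composing with a diagram that itself contains a singleton, which is legitimate only if the singleton already belongs to your category, i.e.\ only if $1\in\Lambda_k$ --- precisely what you are trying to prove. The correct argument is the paper's: cap with semicircles everywhere \emph{outside} the distinguished singleton; since any two adjacent legs may always be capped, this terminates with either a single singleton or a double singleton, both of which are noncrossing and contain a singleton block, whence $1\in\Lambda_k$.
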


\begin{proof}
We will heavily use the various abstract notions and results in \cite{bsp}.

(1) Here the first inclusion follows from $NC_k\subset P_g$. As for the second inclusion, this is equivalent to the following statement: ``If $b$ is a block of a partition $p\in P_g$, then there exists a certain block $b'$ of a certain partition $p'\in P_g\cap NC$, having size $\# b$ or $\# b-1$''. 

But this latter statement follows by using the ``capping'' method in \cite{bsp}. Indeed, we can cap $p$ with semicircles, as for $b$ to remain unchanged, and we end up with a certain partition $p'$ consisting of $b$ and of some extra points, at most one point between any two legs of $b$, which might be connected or not. Note that the semicircle capping being a categorical operation, this partition $p'$ remains in $P_g$.

Now by further capping $p'$ with semicircles, as to get rid of the extra points, the size of $b$ can only increase, and we end up with a one-block partition having size at least that of $b$. This one-block partition is obviously noncrossing, and by capping it again with semicircles we can reduce the number of legs up to $\# b$ or $\# b-1$, and we are done.

(2) The condition $1\in\Lambda_g$ means that there exists $p\in P_g$ having a singleton. By capping $p$ with semicircles outside this singleton, we can obtain a singleton, or a double singleton. Since both these partitions are noncrossing, and have a singleton, we are done. 

(3) Indeed, assume that $P_g$ is not even, and consider a partition $p\in P_g$ having an odd number of legs. By capping $p$ with enough semicircles we can arrange for ending up with a singleton, and since this singleton is by definition in $P_g\cap NC$, we are done.
\end{proof}

We are now in position of splitting the classification. We have the following key result.

\begin{proposition}
Let $G,K$ be as above.
\begin{enumerate}
\item If $K\neq H_n$ then $K\subset G\subset K^+$.

\item If $K=H_n$ then $S_n'\subset G\subset H_n^+$.
\end{enumerate}
\end{proposition}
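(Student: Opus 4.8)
The plan is to translate everything into the language of categories of partitions. By Tannakian duality an inclusion of easy quantum groups reverses the inclusion of the associated categories, so $K\subset G\subset K^+$ is equivalent to $NC_k\subset P_g\subset P_k$, and $S_n'\subset G\subset H_n^+$ is equivalent to $NC_h\subset P_g\subset P_{s'}$. In both statements the inclusion $G\subset K^+$, that is $NC_k\subset P_g$, is automatic from the very definition of $K$, since $NC_k=P_g\cap NC\subset P_g$; this was already observed before Definition 5.1. So the whole content of the proposition is the single reverse inclusion $P_g\subset P_k$ in case (1), and $P_g\subset P_{s'}$ in case (2), and I would deduce both of these directly from Lemma 5.2, running through the list of Theorem 1.3. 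Let $\Lambda_g,\Lambda_k$ be as in that lemma.

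For case (1) I would split into the five possibilities for $K$ other than $H_n$. If $K=S_n$ there is nothing to prove, since $P_k=P_s$ is all partitions. If $K=O_n$ then $\Lambda_k=\{2\}$, so Lemma 5.2(1) gives $\Lambda_g\subset\{1,2\}$, while Lemma 5.2(2) forbids $1\in\Lambda_g$ because $1\notin\Lambda_k$; hence $\Lambda_g=\{2\}$, every $p\in P_g$ is a pairing, and $P_g\subset P_o=P_k$. If $K=B_n$ then $\Lambda_k=\{1,2\}$, so Lemma 5.2(1) gives $\Lambda_g\subset\{1,2\}$ (block size $0$ being impossible), whence $P_g\subset P_b=P_k$. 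If $K=S_n'$ then $NC_k=NC_{s'}$ is even, so Lemma 5.2(3) gives that $P_g$ is even and $P_g\subset P_{s'}=P_k$. If $K=B_n'$ then $NC_k=NC_{b'}$ is even with all blocks of size at most $2$, so combining Lemma 5.2(1) and 5.2(3) shows that $P_g$ consists of singletons and pairings and is even, i.e.\ $P_g\subset P_{b'}=P_k$. In each case this yields $K\subset G$, which together with $G\subset K^+$ closes case (1).

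For case (2), $K=H_n$, the category $NC_h$ consists of noncrossing partitions all of whose blocks have even size, so every partition in it has an even total number of legs; thus $NC_k$ is even and Lemma 5.2(3) applies, giving that $P_g$ is even, i.e.\ $P_g\subset P_{s'}$. This is precisely the inclusion $S_n'\subset G$, and combined with $G\subset H_n^+$ it is the claim. The point worth stressing — and the reason this branch only yields the weaker conclusion — is that here $\Lambda_k=\{2,4,6,\dots\}$, so Lemma 5.2(1)--(2) only forces $\Lambda_g\subset\{2,3,4,\dots\}$: the category $P_g$ may genuinely contain blocks of odd size $\geq 3$, as indeed happens for the hyperoctahedral series (for instance for the categories $E_h^s$ with $s\geq 3$), so $P_g\subset P_h$, i.e.\ $H_n\subset G$, can fail. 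The real obstacle is therefore not in this proposition at all; the substantive work is packaged into Lemma 5.2 and its capping arguments, and the genuine difficulty is that the $K=H_n$ branch simply cannot be settled by these soft methods, which is why it is isolated here and ultimately left open in favour of the conclusion $S_n'\subset G\subset H_n^+$.
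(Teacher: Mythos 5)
Your proof is correct and follows essentially the same route as the paper's: both reduce everything to the single inclusion $P_g\subset P_k$ (resp.\ $P_g\subset P_{s'}$) and then run through the six cases of Theorem 1.3 using the three parts of Lemma 5.2 in exactly the same combinations. Your closing remark about why the $K=H_n$ branch cannot be improved matches the paper's own observation about $H_n^{(s)}$ with $s$ odd.
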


\begin{proof}
We recall that the inclusion $G\subset K^+$ follows from definitions. For the other inclusion, we have 6 cases, depending on the exact value of the easy group $K$:

(1.1) $K=O_n$. Here $\Lambda_k=\{2\}$, so by Lemma 5.2 (1) we get $\{2\}\subset\Lambda_g\subset\{1,2\}$. Moreover, from Lemma 5.2 (2), we get $\Lambda_g=\{2\}$. Thus $P_g\subset P_o$, which gives $O_n\subset G$.

(1.2) $K=S_n$. Here there is nothing to prove, since $S_n\subset G$ by definition.

(1.3) $K=B_n$. Here $\Lambda_k=\{1,2\}$, so by Lemma 5.2 (1) we get $\Lambda_g=\{1,2\}$. Thus we have $P_g\subset P_b$, which gives $B_n\subset G$.

(1.4) $K=S_n'$. Here we have $P_g\subset P_s$ by definition, and by using Lemma 5.2 (3) we deduce that we have $P_g\subset P_{s'}$, which gives $S_n'\subset G$.

(1.5) $K=B_n'$.  Here we have $\Lambda=\{1,2\}$, so by Lemma 5.2 (1) we get $\Lambda_g=\{1,2\}$. This gives $P_g\subset P_b$, and by Lemma 5.2 (3) we get $P_g\subset P_{b'}$, which gives $B_n'\subset G$.

(2) $K=H_n$. Here we have $P_g\subset P_s$ by definition, and by using Lemma 5.2 (3) we deduce that we have $P_g\subset P_{s'}$, which gives $S_n'\subset G$.
\end{proof}

With a little more care, one can prove that the easy group $K$ in the above statement (1) is nothing but the ``classical version'' of $G$, obtained as dual object to the commutative Hopf algebra $C(G)/I$, where $I\subset C(G)$ is the commutator ideal.

Observe also that the above statement (2) cannot be improved. The point is that for the quantum group $H_n^{(s)}$ with $s$ odd we have $K=H_n$, and $K\not\subset G$.

\section{The non-hyperoctahedral case}

In this section we classify the easy quantum groups, under the non-hyperoctahedral assumption $K\neq H_n$. Here $K$ is as usual the easy group from Definition 5.1.

We know from Proposition 5.3 that our easy quantum group $G$ appears as an intermediate quantum group, $K\subset G\subset K^+$. In order to classify these intermediate quantum groups, we use the method in \cite{bv2}, where the problem was solved in the case $G=O_n$. For uniformity reasons, we will include as well the case $G=O_n$ in our study.

We will need a number of technical ingredients.

\begin{definition}
Let $p\in P(k,l)$ be a partition, with the points counted modulo $k+l$, counterclockwise starting from bottom left.
\begin{enumerate}
\item We call semicircle capping of $p$ any partition obtained from $p$ by connecting with a semicircle a pair of consecutive neighbors.

\item We call singleton capping of $p$ any partition obtained from $p$ by capping one of its legs with a singleton.

\item We call doubleton capping of $p$ any partition obtained from $p$ by capping two of its legs with singletons.
\end{enumerate}
\end{definition}

In other words, the semicircle, singleton and doubleton cappings are elementary operations on partitions, which lower the total number of legs by $2,1,2$ respectively. Observe that there are $k+l$ possibilities for placing the semicircle or the singleton, and $(k+l)(k+l-1)/2$ possibilities for placing the double singleton. Observe also that in the case of 2 particular ``semicircle cappings'', namely those at left or at right, the semicircle in question is rather a vertical bar; but we will still call it semicircle.

The various cappings of $p$ will be generically denoted $p'$.

Consider now the $5+5+1=11$ categories of partitions $P_x,NC_x,E_x$, with $x=o,s,b,s',b'$ described in sections 1 and 2. We have the following technical lemma. 

\begin{lemma}
Let $p$ be a partition, having $j$ legs.
\begin{enumerate}
\item If $p\in P_o-E_o$ and $j>4$, there exists a semicircle capping $p'\in P_o-E_o$.

\item If $p\in E_o-NC_o$ and $j>6$, there exists a semicircle capping $p'\in E_o-NC_o$.

\item If $p\in P_s-NC_s$ and $j>4$, there exists a singleton capping $p'\in P_s-NC_s$.

\item If $p\in P_b-NC_b$ and $j>4$, there exists a singleton capping $p'\in P_b-NC_b$.

\item If $p\in P_{s'}-NC_{s'}$ and $j>4$, there exists a doubleton capping $p'\in P_{s'}-NC_{s'}$.

\item If $p\in P_{b'}-NC_{b'}$ and $j>4$, there exists a doubleton capping $p'\in P_{b'}-NC_{b'}$.
\end{enumerate}
\end{lemma}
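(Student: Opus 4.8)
The plan is to treat the six cases separately, but according to a common scheme. In each case we are given a partition $p$ with $j$ legs which lies in some category $\mathcal C_1$ but not in a smaller one $\mathcal C_2$ (e.g. $P_o$ but not $E_o$), and we must find an elementary capping $p'$ that stays in $\mathcal C_1\setminus\mathcal C_2$. Since the relevant cappings (semicircle, singleton, doubleton) are categorical operations, the membership $p'\in\mathcal C_1$ is automatic; the whole content is to keep $p'$ \emph{out} of $\mathcal C_2$. So the real task, in each case, is to locate a ``witness'' for $p\notin\mathcal C_2$ and show it can be preserved by some capping while lowering the leg count.

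First I would fix the combinatorial witnesses. For $\mathcal C_2=NC_x$ the witness of $p\notin NC_x$ is a \emph{crossing}: two blocks (or two strings, in the pairing cases) whose legs interleave in the cyclic order on the $j$ points. For $\mathcal C_2=E_o$ the witness is a string connecting two legs of the same parity (equivalently a string with an odd number of crossings). The strategy in every case is: pick such a witness, involving at most four legs, and then cap somewhere ``away'' from those four legs. Because $j$ is strictly larger than the number of legs tied up in the witness ($4$ in cases (1),(3),(4),(5),(6) and $6$ in case (2), where one needs two crossing \emph{strings}, i.e.\ four legs, plus room to also respect the evenness/balancing bookkeeping), there is at least one free leg, hence at least one admissible place to put a semicircle or singleton. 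One must check that the chosen capping does not accidentally (i) destroy the crossing — avoided by not touching the four witness legs and not connecting across them in a way that uncrosses them — or (ii) leave the ambient category: for the singleton cappings in (3),(4) one must make sure the leg capped is not part of a block whose removal would matter (in $P_s$ anything goes; in $P_b$ one caps a leg of a \emph{pairing} block, turning it into a singleton, which is still allowed), and for the doubleton cappings in (5),(6) one caps two legs so that the total parity is preserved (the ``even part'' condition), which is exactly why one removes \emph{two} legs rather than one. The bound $j>4$ (resp. $j>6$) is what guarantees the witness and the capped leg(s) can be chosen disjoint.

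The main obstacle I expect is case (2), the passage $E_o\setminus NC_o$. Here both the ``crossing'' witness and the parity constraint must be juggled simultaneously: $p$ is a pairing in which every string has an even number of crossings, yet the pairing as a whole is crossing, so \emph{some} pair of strings cross (necessarily an even number of times each, but crossing each other at least twice in the relevant sense). One has to argue that with $j>6$ — i.e.\ at least one pair of legs beyond the (at most) six involved in a minimal crossing configuration — there is a semicircle capping that removes two legs, keeps all remaining strings with an even number of crossings (this is where the parity of the \emph{capped} string's crossings with everything else must be tracked: connecting two neighbours with a semicircle merges/reroutes strings, and one must verify the count stays even), and still leaves a genuine crossing behind. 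The cleanest way is probably to choose the semicircle to connect two consecutive legs that are \emph{not} endpoints of either of the two crossing strings, observe that rerouting through a semicircle changes each remaining string's crossing number with the ambient configuration by an even amount (a semicircle is met an even number of times by any closed-up string), hence preserves $E_o$, and preserves the crossing since its four defining legs are untouched. Enumerating the sub-cases according to where, cyclically, the free legs sit relative to the crossing strings is the routine-but-fiddly part I would not write out in full; the key point to isolate and verify carefully is the evenness-preservation of the semicircle capping.
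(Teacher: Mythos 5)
Your plan is essentially the paper's own proof: in each case one fixes a combinatorial witness for $p\notin\mathcal C_2$ (an odd/same-parity string for $E_o$, a pair of interleaving blocks or strings for $NC_x$) and chooses the capping so as to preserve that witness, with membership of $p'$ in the ambient category coming for free because the capping partition lies in the relevant category. The paper is exactly as schematic as you are about the residual case-checking (``an examination of all possible pictures shows that this is possible''), and you correctly isolate case (2) and the parity bookkeeping as the delicate points, so the proposal matches.
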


\begin{proof}
We write $p\in P(k,l)$, so that the number of legs is $j=k+l$. In the cases where our partition is a pairing, we use as well the number of strings, $s=j/2$.

Let us agree that all partitions are drawn as to have a minimal number of crossings.

We use the same idea for all the proofs, namely to ``isolate'' a block of $p$ having a crossing, or an odd number of crossings, then to ``cap'' $p$ as in the statement, as for this block to remain crossing, or with an odd number of crossings. Here we use of course the observation that the ``balancing'' condition which defines the categories $E_o,E_h$ can be interpreted as saying that each block has an even number of crossings, when the picture of the partition is drawn such that this number of crossings is minimal.

(1) The assumption $p\notin E_o$ means that $p$ has certain strings having an odd number of crossings. We fix such an ``odd'' string, and we try to cap $p$, as for this string to remain odd in the resulting partition $p'$. An examination of all possible pictures shows that this is possible, provided that our partition has $s>2$ strings, and we are done.

(2) The assumption $p\notin NC_o$ means that $p$ has certain crossing strings. We fix such a pair of crossing strings, and we try to cap $p$, as for these strings to remain crossing in $p'$. Once again, an examination of all possible pictures shows that this is possible, provided that our partition has $s>3$ strings, and we are done.

(3) Indeed, since $p$ is crossing, we can choose two of its blocks which are intersecting. If there are some other blocks left, we can cap one of their legs with a singleton, and we are done. If not, this means that our two blocks have a total of $j'\geq j>4$ legs, so at least one of them has $j''>2$ legs. One of these $j''$ legs can always be capped with a singleton, as for the capped partition to remain crossing, and we are done.

(4) Here we can simply cap with a singleton, as in (3).

(5,6) Here we can cap with a doubleton, by proceeding twice as in (3).
\end{proof}

For a collection of subsets $X(k,l)\subset P(k,l)$ we denote by $<X>\subset P$ the category of partitions generated by $X$. In other words, the elements of $<X>$ come from those of $X$ via the categorical operations for the categories of partitions, which are the vertical and horizontal concatenation and the upside-down turning. See \cite{bsp}.

\begin{lemma}
Let $p$ be a partition.
\begin{enumerate}
\item If $p\in P_o-E_o$ then $<p,NC_o>=P_o$.

\item If $p\in E_o-NC_o$ then $<p,NC_o>=E_o$.

\item If $p\in P_s-NC_s$ then $<p,NC_s>=P_s$.

\item If $p\in P_b-NC_b$ then $<p,NC_b>=P_b$.

\item If $p\in P_{s'}-NC_{s'}$ then $<p,NC_{s'}>=P_{s'}$.

\item If $p\in P_{b'}-NC_{b'}$ then $<p,NC_{b'}>=P_{b'}$.
\end{enumerate}
\end{lemma}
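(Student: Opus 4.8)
The plan is to deduce Lemma 6.4 from Lemma 6.3 by a standard ``descent on the number of legs'' argument, exploiting the fact that semicircle, singleton and doubleton cappings are all categorical operations, so that $<p,NC_x>$ is stable under them. In each of the six cases, let $P_x$ denote the target category ($P_o,E_o,P_s,P_b,P_{s'},P_{b'}$ respectively) and $NC_x$ the corresponding noncrossing category. Since $p\in P_x$ and $NC_x\subset P_x$, and since $P_x$ is a category, we trivially have $<p,NC_x>\subset P_x$; the whole content is the reverse inclusion. For that it suffices to produce, inside $<p,NC_x>$, a single ``small'' generator that is known to generate $P_x$ over $NC_x$ — concretely the basic crossing $(13')(24')$ (or its half-liberated / even-part analogue) — since the statements of Theorems 1.3, 1.6 together with the description of the categorical operations identify $P_x$ as exactly the category generated by $NC_x$ together with that crossing.

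The descent goes as follows. Given $p$ in the relevant difference set $P_x-NC_x$ (or $P_o-E_o$, resp. $E_o-NC_o$) with $j$ legs, if $j$ exceeds the threshold in Lemma 6.3 ($4$ or $6$), apply that lemma to obtain a capping $p'$ of $p$ lying in the same difference set, with strictly fewer legs ($j-2$ for semicircle/doubleton cappings, $j-1$ for singleton cappings). Because the capping is obtained by composing $p$ with a fixed element of $NC_x$ (a semicircle, or a singleton, which lie in all the relevant noncrossing categories — note $NC_b,NC_{b'}$ contain singletons and $NC_o,NC_s$ etc. contain the semicircle), we have $p'\in<p,NC_x>$. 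Iterating, we descend until we reach a partition $p_0$ in the difference set with $j\le 4$ (resp. $\le 6$). It then remains to check by hand that every such ``minimal'' $p_0$ in the relevant difference set, together with $NC_x$, generates $P_x$: in practice each such $p_0$ is either the basic crossing itself, or is turned into it by one more rotation/capping against $NC_x$. This finite case check is the analogue of the base case of an induction.

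The main obstacle I expect is precisely this base-case verification: one must enumerate the partitions with at most $4$ (or $6$, in case (2)) legs that lie in $P_x-NC_x$ — equivalently, that are ``crossing'' in the appropriate sense — and confirm each of them generates the crossing over $NC_x$. For the pairing cases (1),(2) this means looking at pairings on $4$ points (one crossing pairing, $(13)(24)$ up to rotation) and, for (2), pairings on $6$ points with an odd number of crossings on some string; for (3)--(6) one must handle partitions on $4$ points that are crossing, keeping track of the parity/singleton constraints defining $P_s,P_b,P_{s'},P_{b'}$. A secondary point to be careful about is that Lemma 6.3's hypotheses exclude exactly the threshold values, so one has to be sure the descent never ``overshoots'' — but since each capping lowers $j$ by at most $2$, landing in the range $\{3,4\}$ (or $\{5,6\}$ for (2)) is automatic, and the finitely many resulting partitions are dealt with directly.

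I would therefore organize the write-up as: (i) note $<p,NC_x>\subset P_x$ is immediate; (ii) recall that $P_x=<\text{basic crossing},NC_x>$ from Theorems 1.3 and 1.6; (iii) run the descent via Lemma 6.3 to reduce to bounded size; (iv) dispatch the bounded-size cases one at a time, exhibiting in each the crossing generator inside $<p,NC_x>$. Steps (i)--(iii) are uniform across all six items; only step (iv) is case-by-case, and it is short because so few partitions survive the size bound.
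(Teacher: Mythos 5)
Your proposal is correct and follows essentially the same route as the paper: a descent on the size of the partition via the capping lemma (noting that the semicircle, singleton and doubleton used for capping lie in the relevant noncrossing category, so the capped partition stays inside $<p,NC_x>$), reduced to a finite check of the small partitions, which come down to the basic crossing generating the full category over $NC_x$. The only detail the paper adds that you omit is that when the capping ``semicircle'' is a bar attached at the far left or right one must invoke a categorical rotation to see that the capping stays within $<p,NC_x>$; this is a minor point and does not affect the argument.
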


\begin{proof}
We use Lemma 6.2, together with the observation that the ``capping partition'' appearing there is always in the good category. 

That is, we use the fact that the semicircle is in $NC_o,NC_{s'}$, the singleton is in $NC_s,NC_b$, and the doubleton is in $NC_{b'}$. This observation tells us that, in each of the cases under consideration, the category to be computed can only decrease when replacing $p$ by one of its cappings $p'$. Indeed, for the singleton and doubleton cappings this is clear from definitions, and for the semicircle capping this is clear as well from definitions, unless in the case where the ``capping semicircle'' is actually a ``bar'' added at left or at right, where we can use a categorical rotation operation as in \cite{bsp}.

(1,2) These assertions can be proved by recurrence on the number of strings, $s=(k+l)/2$. Indeed, by using Lemma 6.2 (1,2), for $s>3$ we have a descent procedure $s\to s-1$, and this leads to the situation $s\in\{1,2,3\}$, where the statement is clear.

(3) We can proceed by recurrence on the number of legs of $p$. If the number of legs is $j=4$, then $p$ is a basic crossing, and we have $<p>=P_s$. If the number of legs is $j>4$ we can apply Lemma 6.2 (3), and the result follows from $<p>\supset <p'>=P_s$.

(4,5,6) This is similar to the proof of (1,3,2), by using Lemma 6.2 (4,5,6).
\end{proof}

\begin{lemma}
Let $p$ be a partition.
\begin{enumerate}
\item If $p\in P_o$ then $<p,NC_o>\in\{P_o,E_o,NC_o\}$.

\item If $p\in P_s$ then $<p,NC_s>\in\{P_s,NC_s\}$.

\item If $p\in P_b$ then $<p,NC_b>\in\{P_b,NC_b\}$.

\item If $p\in P_{s'}$ then $<p,NC_{s'}>\in\{P_{s'},NC_{s'}\}$.

\item If $p\in P_{b'}$ then $<p,NC_{b'}>\in\{P_{b'},NC_{b'}\}$.
\end{enumerate}
\end{lemma}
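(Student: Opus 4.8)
The plan is to combine Lemma 6.3 with a clean dichotomy argument: in each case, the generated category $\langle p, NC_x\rangle$ either is noncrossing (precisely when $T_p$ is already in the noncrossing category, i.e. $p\in NC_x$) or it contains a partition witnessing ``crossing-ness'', which by Lemma 6.3 forces it up to the full classical category $P_x$ (respectively $E_o$, as an intermediate possibility in case (1)). So the whole lemma reduces to the statement: if $p\in P_x$ but $p\notin NC_x$, then $\langle p, NC_x\rangle$ equals $P_x$ (or $E_o$ in case (1)). First I would dispose of the trivial direction: if $p\in NC_x$, then $\langle p,NC_x\rangle = NC_x$, since $NC_x$ is already a category.

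For the nontrivial direction in cases (2)--(5) (labelled as (3),(4),(5),(6) in the statement's own enumeration, and as the last four items here), the argument is uniform. Suppose $p\in P_x\setminus NC_x$. Then $p$ is a crossing partition lying in the relevant classical category, so $p$ is exactly of the form treated in Lemma 6.3, items (3)--(6): for $x=s$ we have $p\in P_s\setminus NC_s$, for $x=b$ we have $p\in P_b\setminus NC_b$, and similarly for the even cases $s',b'$. Lemma 6.3 then gives directly $\langle p, NC_x\rangle = P_x$. Combined with the trivial direction, this shows $\langle p,NC_x\rangle\in\{P_x,NC_x\}$, which is exactly the claim in (2)--(5).

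Case (1), $x=o$, requires slightly more care because of the intermediate category $E_o$. Here if $p\in P_o$ then $p$ is a pairing, and there are three subcases. If $p\in NC_o$, then $\langle p,NC_o\rangle = NC_o$. If $p\in E_o\setminus NC_o$, then by Lemma 6.3(2) we get $\langle p,NC_o\rangle = E_o$. If $p\in P_o\setminus E_o$, then by Lemma 6.3(1) we get $\langle p,NC_o\rangle = P_o$. Since $NC_o\subset E_o\subset P_o$, these three subcases are exhaustive, and in each one $\langle p,NC_o\rangle\in\{P_o,E_o,NC_o\}$, as desired.

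The only genuinely substantive input is Lemma 6.3, and the main subtlety to keep in mind — though it is already absorbed into the statement of Lemma 6.3 — is that in case (1) a single crossing pairing $p$ can generate either $E_o$ or all of $P_o$ depending on the parity of the crossing number of its strings, which is why the conclusion there has three possible values rather than two; the dichotomy $\{P_x,NC_x\}$ in the other cases reflects that there is no intermediate ``balanced'' category between $NC_s,NC_b,NC_{s'},NC_{b'}$ and their classical counterparts. So there is no real obstacle here: the proof is a short case-check organized entirely around the trichotomy $p\in NC_x$ / $p\in E_o\setminus NC_o$ / $p\notin E_o$ in the orthogonal case, and the dichotomy $p\in NC_x$ / $p\notin NC_x$ in the remaining cases, feeding each branch into the appropriate part of Lemma 6.3.
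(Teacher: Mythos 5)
Your proposal is correct and is exactly the argument the paper intends: the paper's proof of this lemma is the single sentence ``This follows by rearranging the various technical results in Lemma 6.3,'' and your case analysis (the trichotomy $NC_o\subset E_o\subset P_o$ in the orthogonal case, the dichotomy $p\in NC_x$ versus $p\notin NC_x$ in the others, feeding each branch into the corresponding item of Lemma 6.3) is precisely that rearrangement, spelled out.
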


\begin{proof}
This follows by rearranging the various technical results in Lemma 6.3.
\end{proof}

We are now in position of stating the main result in this paper. Let us call ``non-hyperoctahedral'' any easy quantum group $G$ such that $K\neq H_n$.

\begin{theorem}
There are exactly $11$ non-hyperoctahedral easy quantum groups, namely:
\begin{enumerate}
\item $O_n,O_n^*,O_n^+$: the orthogonal quantum groups.

\item $S_n,S_n^+$: the symmetric quantum groups.

\item $B_n,B_n^+$: the bistochastic quantum groups.

\item $S_n',S_n'^+$: the modified symmetric quantum groups.

\item $B_n',B_n'^+$: the modified bistochastic quantum groups.
\end{enumerate}
\end{theorem}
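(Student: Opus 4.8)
The strategy is to combine the reduction established in Proposition 5.3(1) with the categorical computation carried out in Lemmas 6.2--6.4. By Proposition 5.3(1), a non-hyperoctahedral easy quantum group $G$ with associated easy group $K\neq H_n$ satisfies $K\subset G\subset K^+$; since easy quantum groups are determined by their categories of partitions, this translates into $NC_k\subset P_g\subset P_k$, where $P_k$ and $NC_k$ are the categories from Theorems 1.3 and 1.6 attached to $K$. By Theorem 1.3 the easy group $K$ takes one of the five values $O_n,S_n,B_n,S_n',B_n'$ (the case $K=H_n$ being excluded by hypothesis), so there are exactly five ``slots'' to analyze, and in each slot we must enumerate the categories $P_g$ with $NC_k\subset P_g\subset P_k$.

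The core step is to show that in each of the five cases the only intermediate categories are those listed. First I would observe that $P_g$ is generated, as a category of partitions, by $NC_k$ together with any set of ``extra'' partitions $p\in P_g$ witnessing the crossings; that is, $P_g=\langle\,\{p\in P_g\}\cup NC_k\,\rangle$. If $P_g=NC_k$ we are in the free case. Otherwise pick $p\in P_g\setminus NC_k$, i.e. $p\in P_k\setminus NC_k$, and apply Lemma 6.4 for the relevant value of $x\in\{o,s,b,s',b'\}$: this gives $\langle p,NC_x\rangle\in\{P_x,E_x,NC_x\}$ in the orthogonal case and $\langle p,NC_x\rangle\in\{P_x,NC_x\}$ in the other four cases. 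Running this over all $p\in P_g\setminus NC_x$ and taking the join, we conclude that $P_g$ itself equals $NC_x$, or $E_x$ (orthogonal case only), or $P_x$. Thus for $K=O_n$ we get exactly $O_n^+,O_n^*,O_n$; for $K=S_n$ exactly $S_n^+,S_n$; for $K=B_n$ exactly $B_n^+,B_n$; for $K=S_n'$ exactly $S_n'^+,S_n'$; and for $K=B_n'$ exactly $B_n'^+,B_n'$. Summing, this is $3+2+2+2+2=11$ quantum groups.

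Two bookkeeping points remain. One must check that the eleven categories produced are genuinely pairwise distinct and genuinely correspond to the eleven named quantum groups --- but this is already recorded in Theorems 1.3, 1.6 and 2.4, together with the fact (noted in the discussion after Lemma 2.3) that $S_n^*=S_n$, $B_n^*=B_n$, $S_n'^*=S_n'$, $B_n'^*=B_n'$, so that no ``new'' half-liberation appears outside the orthogonal slot. One must also confirm that each listed category $D$ does satisfy $NC_k\subset D\subset P_k$ for the correct $K$, i.e. that the Definition 5.1 assignment $G\mapsto K$ sends each of the eleven back to the expected easy group; this is immediate from $D\cap NC=NC_k$ in each case, which one reads off from the explicit descriptions.

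**Main obstacle.** The substance of the argument is entirely in Lemma 6.2, which is where the combinatorial case analysis ``isolate a crossing block and cap so that it stays crossing'' lives, and in the passage from Lemma 6.2 to Lemma 6.3 (the recurrence/descent on the number of legs or strings, together with the verification that the capping partition stays in the good subcategory $NC_x$ so that the generated category only shrinks under capping). The genuinely delicate case is the orthogonal one: there the descent must be run twice, first down to $E_o$-membership (Lemma 6.2(1)) and then, within $E_o$, down to $NC_o$-membership (Lemma 6.2(2)), with distinct base cases ($s\le 3$ for the first, $s\le 3$ for the second) requiring separate inspection. Relative to all of that, the theorem as stated above is a short assembly job: invoke Proposition 5.3(1) to restrict to five slots, invoke Lemma 6.4 slot by slot, and count. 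The only place where care is genuinely needed in this final step is making sure the case $K=S_n$ really contributes only $S_n$ and $S_n^+$ --- here one cannot apply Lemma 6.4 verbatim to a partition with singletons unless one already knows $P_g\subset P_s$, which is Proposition 5.3(1.2)/(1.4) rather than anything new --- and ensuring the $S_n',B_n'$ ``even part'' slots are handled with the doubleton-capping version (Lemma 6.4(4),(5)) rather than the singleton one.
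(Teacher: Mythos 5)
Your proposal is correct and follows exactly the paper's own route: reduce via Proposition 5.3(1) to classifying intermediate categories $NC_k\subset P_g\subset P_k$ for the five non-hyperoctahedral slots, then apply Lemma 6.4 to each partition $p\in P_g\setminus NC_k$ and count. The paper's proof is just a terser statement of this same assembly, so your extra bookkeeping (taking the join over all such $p$, and checking the eleven resulting categories are distinct and correctly labelled) is a faithful expansion rather than a different argument.
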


\begin{proof}
We know from Proposition 5.3 that what we have to do is to classify the easy quantum groups satisfying $K\subset G\subset K^+$. More precisely, we have to prove that for $K=S_n,B_n,S_n',B_n'$ there is no such partial liberation, and that for $K=O_n$ there is only one partial liberation, namely the above-mentioned quantum group $K^*$. But this follows from Lemma 6.4, via the Tannakian results in \cite{bsp}.
\end{proof}

As for the classification in the hyperoctahedral case, this seems to be a quite difficult problem, that we have to leave open.

\section{Laws of characters}

In this section we discuss the computation of the asymptotic law of the fundamental character $\chi={\rm Tr}(u)$, and of its truncated versions $\chi_t=\sum_{i=1}^{[tn]}u_{ii}$ with $t\in (0,1]$.

These computations, which might seem quite technical, are in fact of great relevance in the general context of representation theory. Given a compact group $G\subset U_n$, or more generally a compact quantum group $G\subset U_n^+$, the main representation theory problem is to classify the irreducible representations of $G$. By the Peter-Weyl theory these irreducible representations appear into the tensor powers $u^{\otimes k}$ of the fundamental representation, and they can be in fact identified with the minimal projections of the algebra $End(u^{\otimes k})$.

The exact computation of $End(u^{\otimes k})$ is in general a quite difficult problem. However, an easier question, whose answer is in general extremely useful, concerns the computation of the dimension of this algebra. Now since this dimension can be simply obtained by integrating $\chi^{2k}$, we are led to the fundamental problem of computing the law of $\chi$.

In the quantum group context, the lone computation of the law of $\chi$, and the comparison with the classical results, can be quite puzzling. The problem appears for instance with $S_n$ and $S_n^+$, where the law of $\chi$ is respectively Poisson with $n\to\infty$, and free Poisson with $n\geq 4$. The lack of symmetry in this fundamental computation was conceptually understood in \cite{bco}, where it was shown that the correct invariant to look at is the law of the truncated character $\chi_t$, with $t\in (0,1]$. So, our starting definition will be as follows.

\begin{definition}
Associated to an easy quantum group $G\subset U_n^+$ is the truncated character
$$\chi_t=\sum_{i=1}^{[tn]}u_{ii}$$
where $u=(u_{ij})$ is the matrix of standard coordinates, and $t\in (0,1]$.
\end{definition}

Let us recall now some basic results from \cite{bsp}. Let $G$ be an easy quantum group, and denote by $D_k\subset P(0,k)$ the corresponding sets of diagrams, having no upper points. We define the Gram matrix to be $G_{kn}(p,q)=n^{b(p\vee q)}$, where $b(.)$ is the number of blocks. The Weingarten matrix is by definition its inverse, $W_{kn}=G_{kn}^{-1}$. In order for this inverse to exist, $n$ has to be big enough, and the assumption $n\geq k$ is sufficient. In the general case the notion of quasi-inverse must be used, see \cite{cma} for a detailed discussion here.

\begin{theorem}
The Haar integration over $G$ is given by
$$\int_Gu_{i_1j_1}\ldots u_{i_kj_k}\,du=\sum_{p,q\in D_k}\delta_p(i)\delta_q(j)W_{kn}(p,q)$$
where the $\delta$ symbols are $0$ or $1$, depending on whether the indices fit or not.
\end{theorem}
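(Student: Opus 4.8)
The plan is to establish the Weingarten integration formula by combining the Peter--Weyl decomposition with the explicit description of $\mathrm{Hom}$-spaces coming from easiness. First I would observe that the integral $\int_G u_{i_1j_1}\ldots u_{i_kj_k}\,du$ is, by the general theory of compact quantum groups (Woronowicz, \cite{wo1}), nothing but the matrix of coefficients of the orthogonal projection $P:(\mathbb C^n)^{\otimes k}\to\mathrm{Fix}(u^{\otimes k})$ onto the fixed vectors of the $k$-th tensor power of the fundamental corepresentation. Concretely, writing $\xi_{i_1\ldots i_k}$ for the canonical basis, one has
$$
\int_G u_{i_1j_1}\ldots u_{i_kj_k}\,du=\left\langle P(e_{j_1}\otimes\cdots\otimes e_{j_k}),\,e_{i_1}\otimes\cdots\otimes e_{i_k}\right\rangle .
$$
This is the standard ``integral = projection onto fixed space'' principle, and I would simply cite it.

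Next I would use easiness. By Definition 2.1 the space $\mathrm{Fix}(u^{\otimes k})=\mathrm{Hom}(1,u^{\otimes k})$ is spanned by the vectors $\xi_p:=T_p(1)$ with $p\in D_k=D(0,k)\subset P(0,k)$, where $\xi_p=\sum_{i}\delta_p(i)\,e_{i_1}\otimes\cdots\otimes e_{i_k}$. These vectors need not be orthonormal, but their Gram matrix is exactly $\langle\xi_p,\xi_q\rangle=\sum_i\delta_p(i)\delta_q(i)=n^{b(p\vee q)}$, since a multi-index $i$ is counted iff it is constant on the blocks of both $p$ and $q$, i.e.\ constant on the blocks of $p\vee q$, giving $n^{b(p\vee q)}$ choices; this is the matrix $G_{kn}$. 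Provided $n$ is large enough (e.g.\ $n\ge k$, as recalled before the statement) the family $\{\xi_p\}_{p\in D_k}$ is linearly independent, so $G_{kn}$ is invertible with inverse $W_{kn}$, and the projection onto their span is the standard formula $P=\sum_{p,q\in D_k}W_{kn}(p,q)\,|\xi_p\rangle\langle\xi_q|$. One checks this is idempotent, self-adjoint, and acts as the identity on each $\xi_r$ by the defining relation $\sum_q G_{kn}(r,q)W_{kn}(q,p)=\delta_{rp}$.

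Finally I would substitute this expression for $P$ into the integral formula from the first step:
$$
\int_G u_{i_1j_1}\ldots u_{i_kj_k}\,du=\sum_{p,q\in D_k}W_{kn}(p,q)\,\langle e_{j_1}\otimes\cdots\otimes e_{j_k},\xi_q\rangle\,\langle\xi_p,e_{i_1}\otimes\cdots\otimes e_{i_k}\rangle=\sum_{p,q\in D_k}\delta_p(i)\,\delta_q(j)\,W_{kn}(p,q),
$$
using $\langle\xi_p,e_{i_1}\otimes\cdots\otimes e_{i_k}\rangle=\delta_p(i)$ and symmetry of $W_{kn}$ (which follows from symmetry of $G_{kn}$). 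This is exactly the claimed formula.

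The only genuinely delicate point is the invertibility of the Gram matrix $G_{kn}$ and the hypothesis under which it holds: one must know that the $\xi_p$ are linearly independent for $n$ sufficiently large. I expect this to be the main obstacle, and it is precisely where the assumption $n\geq k$ enters; for smaller $n$ one must instead work with a generalized inverse, as indicated in the statement with the reference to \cite{cma}. Everything else is the routine bookkeeping of the Weingarten calculus, already developed in \cite{bco}, \cite{bsp}, and I would present it as such, citing those papers for the linear independence and the quasi-inverse discussion rather than reproving it here.
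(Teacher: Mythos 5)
Your proposal is correct and follows essentially the same route as the paper, which simply cites \cite{bsp} with the one-line idea that the integrals form the orthogonal projection onto $Fix(u^{\otimes k})=span(T_p\,|\,p\in D_k)$; you have merely written out the standard Gram--Weingarten bookkeeping that this projection formula entails. The computation $\langle\xi_p,\xi_q\rangle=n^{b(p\vee q)}$ and the caveat about invertibility versus quasi-inverse are exactly as in the source.
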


\begin{proof}
This is proved in \cite{bsp}, the idea being that the integrals on the left, taken altogether, form the orthogonal projection on $Fix(u^{\otimes k})=span(D_k)$.
\end{proof}

The Weingarten formula is particularly effective in the classical and free cases, where complete computations were performed in \cite{bsp}. Let us record here the following result.

\begin{theorem}
The asymptotic law of $\chi_t=\sum_{i=1}^{[tn]}u_{ii}$ with $t\in (0,1]$ is as follows:
\begin{enumerate}
\item For $O_n,S_n,H_n,B_n$ we get the Gaussian, Poisson, Bessel and shifted Gaussian laws, which form convolution semigroups.

\item For $O_n^+,S_n^+,H_n^+,B_n^+$ we get the semicircular, free Poisson, free Bessel and shifted semicircular laws, which form free convolution semigroups.

\item For $S_n',B_n',S_n'^+,B_n'^+$ we get symmetrized versions of the laws for $S_n,B_n,S_n^+,B_n^+$, which do not form classical or free convolution semigroups.
\end{enumerate}
\end{theorem}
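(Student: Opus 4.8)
The statement to prove is Theorem 7.4, which computes the asymptotic law of $\chi_t$ for the twelve quantum groups in the three families. Let me sketch a proof plan.

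=== PROOF PROPOSAL ===

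The plan is to compute moments via the Weingarten formula (Theorem 7.3) and identify the limiting combinatorial objects. The basic observation is that
$$\int_G\chi_t^k\,du=\sum_{i_1,\ldots,i_k=1}^{[tn]}\int_Gu_{i_1i_1}\ldots u_{i_ki_k}\,du=\sum_{p,q\in D_k}W_{kn}(p,q)\sum_{i_1,\ldots,i_k=1}^{[tn]}\delta_p(i)\delta_q(i),$$
and the inner index sum equals $[tn]^{b(p\vee q)}$ since a multi-index with all upper and lower legs equal is constant on the blocks of $p\vee q$. So $\int_G\chi_t^k\,du=\sum_{p,q\in D_k}[tn]^{b(p\vee q)}W_{kn}(p,q)$. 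The first step is the standard asymptotic estimate for the Weingarten matrix: writing $G_{kn}=\Theta(1+O(n^{-1}))$ where $\Theta$ is a diagonal-dominant matrix with $\Theta(p,q)=n^{b(p\vee q)-|p|/2\cdots}$ appropriately normalized, one gets $W_{kn}(p,q)=n^{-b(p)}(\mu(p,q)+O(n^{-1}))$ with $\mu$ the Möbius function of the partition lattice restricted to $D_k$. Feeding this back, $\int_G\chi_t^k\,du\to\sum_{p\leq q,\ p,q\in D_k}t^{|p|}\mu(p,q)$ where only terms with $b(p\vee q)=b(p)$ survive, i.e. $p\leq q$. After Möbius inversion this collapses to $\lim\int_G\chi_t^k\,du=\sum_{p\in D_k}t^{b(p)}$ when $D_k$ is closed under the relevant operations — this is exactly the combinatorial count that produces the named laws.

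The second step is to identify $\sum_{p\in D_k}t^{b(p)}$ as the $k$-th moment of the claimed distribution, case by case, using the explicit descriptions of $D_k$ from Theorems 1.3 and 1.6. For $O_n$, $D_k$ is all pairings and $\sum t^{k/2}\cdot(\text{number of pairings})$ gives the Gaussian moments (with variance $t$); for $S_n$, $D_k$ is all partitions and $\sum_{p\in P(k)}t^{b(p)}$ is the Bell-polynomial expression for the Poisson$(t)$ moments; for $H_n$, partitions with even blocks give the Bessel law; for $B_n$, singletons-and-pairings give the shifted Gaussian. In the free cases one restricts to noncrossing partitions, so $\sum_{p\in NC(k)}t^{b(p)}$ and its variants give the semicircular, free Poisson, free Bessel, and shifted semicircular moments — here one invokes the known moment-cumulant / Catalan-type identities, e.g. the free Poisson$(t)$ moments are $\sum_{p\in NC(k)}t^{b(p)}$. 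For the primed groups one uses that $D_k$ is the ``even part'', which amounts to replacing the variable by a symmetrized version; concretely the generating count factors through a $\mathbb Z_2$-symmetrization, yielding the symmetrized laws, and one checks directly that these are not $\boxplus$- or $*$-infinitely-divisible semigroups by examining the $t$-dependence of the first few free/classical cumulants (e.g. the second and fourth cumulants scale inconsistently in $t$). The semigroup assertions in (1) and (2) are then the statement that the $t$-dependence is $\exp(t\cdot\text{generator})$ or its free analogue, visible from the fact that $t^{b(p)}$ is multiplicative over blocks, which is precisely the classical/free cumulant-linearity in $t$.

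The main obstacle is the uniform control of the Weingarten matrix $W_{kn}$ as $n\to\infty$ for fixed $k$ — justifying $W_{kn}(p,q)=n^{-b(p)}(\mu(p,q)+O(n^{-1}))$ and, more importantly, that the error terms are negligible after multiplying by $[tn]^{b(p\vee q)}$ and summing over the finite set $D_k$. This is where one must cite the analysis of the Gram matrix $G_{kn}$ from \cite{bsp} (and \cite{bco}, \cite{cma}): the key point is that $G_{kn}$, once rescaled, tends to an invertible matrix indexed by $D_k$ whose inverse is governed by the Möbius function, so the leading contribution to $\int_G\chi_t^k\,du$ comes only from pairs $p\leq q$ and equals $\sum_{p\in D_k}t^{b(p)}$ after cancellation. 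Everything else — matching these numbers to named distributions, and checking the semigroup/non-semigroup dichotomy — is then a finite check using standard moment formulas, carried out in \cite{bsp} for the groups and free quantum groups, and for the primed cases by the symmetrization argument above.
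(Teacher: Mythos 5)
Your proposal is correct and follows essentially the same route as the paper, which simply defers to \cite{bsp} with the remark that the result is obtained there ``by using the Weingarten formula and cumulants'' --- precisely the Gram/Weingarten asymptotics, the collapse of $\sum_{p,q}[tn]^{b(p\vee q)}W_{kn}(p,q)$ to $\sum_{p\in D_k}t^{b(p)}$, and the identification of these moments via linearity of (free) cumulants in $t$ that you sketch. The only small imprecision is your leading-order formula for $W_{kn}(p,q)$, whose exponent should depend on both $p$ and $q$ (only the diagonal behaves like $n^{-b(p)}$), but this does not affect which terms survive in the limit or the final answer.
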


\begin{proof}
This is proved in \cite{bsp}, by using the Weingarten formula and cumulants. Note that the semigroups in (1) and (2) are in Bercovici-Pata bijection \cite{bpa}. 
\end{proof}

We should mention that the measures in (3), while not forming semigroups due to the canonical copy of $\mathbb Z_2$, which produces a ``correllation'', are very close to forming some kind of semigroup. We intend to come back to this question in our next papers \cite{ez2}, \cite{ez3}.

In the remaining cases, the Weingarten formula is less effective, because the counting of partitions and of their blocks is a quite delicate task. In the case of half-liberations and of the hyperoctahedral series we will use instead the projective versions computed in the previous sections, which reduce the problem to a classical computation.

Let us begin with the following definition.

\begin{definition}
We use the following complex probability measures:
\begin{enumerate}
\item The complex Gaussian law of parameter $t>0$ is the law of $x+iy$, where $x,y$ are Gaussian variables of parameter $t$, independent. 

\item The $s$-Bessel law of parameter $t>0$ is the law of $\sum_{r=1}^se^{2\pi ir/s}x_i$, where $x_1,\ldots,x_s$ are Poisson variables of parameter $t/s$, independent.
\end{enumerate}
\end{definition}

The complex Gaussian laws are well-known to form a convolution semigroup. The same holds for the $s$-Bessel laws, and we refer to \cite{bb+} for a complete discussion here. Let us just mention that the ``Bessel'' terminology comes from the fact that at $s=2$, the density of the corresponding discrete measure on $\mathbb R$ is given by a Bessel function of the first kind.

\begin{definition}
Given a complex probability measure $\mu$, we call ``squeezed version'' of it the law of $\sqrt{zz^*}$, where $z$ follows the law $\mu$. 
\end{definition}

This law doesn't depend of course on the choice of $z$.

As an example, the squeezed version of the complex Gaussian law of parameter 1 is the Rayleigh law. This is because with $z=x+iy$ we have $zz^*=x^2+y^2$.

Another interesting example, of key relevance in free probability, is the fact that the squeezed version of Voiculescu's circular law is Wigner's semicircle law. See e.g. \cite{nsp}. 

\begin{theorem}
The asymptotic law of $\chi_t=\sum_{i=1}^{[tn]}u_{ii}$ with $t\in (0,1]$ is as follows:
\begin{enumerate}
\item For $O_n^*$ we get the squeezed complex Gaussian semigroup.

\item For $H_n^{(s)}$ we get the squeezed $s$-Bessel semigroup. 
\end{enumerate}
\end{theorem}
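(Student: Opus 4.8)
The plan is to exploit the projective-version identifications already established, namely $PO_n^*=PU_n$ (Theorem 2.6) and $PH_n^{(s)}=PH_n^s$ (Theorem 3.4), in order to reduce the computation of the law of $\chi_t$ for $O_n^*$ and $H_n^{(s)}$ to a purely classical character computation for $PU_n$ and $PH_n^s=(\mathbb Z_s\wr S_n)/\mathbb T$. The key observation is that the matrix of standard coordinates of $PG$ is $v_{ij,kl}=u_{ik}u_{jl}^*$, so that the truncated character $\chi_t(PG)=\sum_{i,j}^{[tn]}v_{ij,ij}=\sum_{i,j}^{[tn]}u_{ii}u_{jj}^*=\chi_t(G)\chi_t(G)^*$. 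Hence the law of $\chi_t(PG)$ is precisely the squeezed version (in the sense of Definition 7.7, $\sqrt{zz^*}$) of the law of $\chi_t(G)$, and computing the former yields the latter.

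The first step is thus to record this elementary identity $\chi_t(PG)=\chi_t(G)\chi_t(G)^*$ at the level of the Haar states, together with the observation that since $O_n^*$ and $H_n^{(s)}$ have self-adjoint (real) coordinates, $\chi_t(G)$ is self-adjoint, so in fact $\chi_t(PG)=\chi_t(G)^2$; one must be slightly careful here, because although $u_{ij}=u_{ij}^*$, the half-commutation relations prevent $\chi_t(G)$ from being central, so ``squeezing'' really is the operation needed rather than simply taking a square root of a positive variable. The second step is to compute the asymptotic law of $\chi_t$ for the classical groups $PU_n$ and $PH_n^s$. For $PU_n$ one lifts to $U_n$: the asymptotic law of $\sum_{i=1}^{[tn]}u_{ii}$ over $U_n$ is the standard complex Gaussian of parameter $t$ (a classical result, e.g. via Weingarten calculus or Diaconis--Shahshahani), and passing to $PU_n=U_n/\mathbb T$ does not change this asymptotic law since the central $\mathbb T$ contributes only a global phase which disappears in the moments $\int |\chi_t|^{2k}$ that determine the (rotationally invariant) limit. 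For $PH_n^s$ one lifts to $H_n^s=\mathbb Z_s\wr S_n$: the diagonal entry $u_{ii}$ is $\zeta^{a_i}$ if $i$ is a fixed point of the underlying permutation with associated root of unity $\zeta^{a_i}$, and $0$ otherwise, so $\chi_t=\sum_{i=1}^{[tn]}u_{ii}$ counts roots of unity at fixed points; a short computation with the cycle structure of a random permutation (the number of fixed points among $[tn]$ points is asymptotically Poisson of parameter $t$, and the attached $\mathbb Z_s$-labels are i.i.d.\ uniform) shows the limiting law is $\sum_{r=1}^s e^{2\pi i r/s}x_r$ with $x_r$ independent Poisson of parameter $t/s$, i.e.\ the $s$-Bessel law of parameter $t$. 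Again the quotient by $\mathbb T$ is harmless at the level of the limiting law.

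The third step is to assemble: squeezing the complex Gaussian semigroup of parameter $t$ gives the ``squeezed complex Gaussian semigroup'', and squeezing the $s$-Bessel semigroup of parameter $t$ gives the ``squeezed $s$-Bessel semigroup'', which are by definition the laws claimed in the statement. The semigroup property is inherited from the classical semigroup properties of the complex Gaussian and $s$-Bessel families noted after Definition 7.6, under the map $\mu\mapsto$ (law of $\sqrt{zz^*}$); strictly speaking one should remark that ``squeezed semigroup'' here just means the image family, since the squeezing operation need not be a homomorphism for convolution, but the statement only asserts that we land in this explicitly named family.

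The main obstacle I expect is the second step for $PH_n^s$: one must justify carefully that the asymptotic moments of $\chi_t$ over the quotient $PH_n^s=H_n^s/\mathbb T$ coincide with those over $H_n^s$ itself --- equivalently, that only the $\mathbb T$-invariant (balanced) moments survive in the limit, which is exactly why the complex/squeezed picture appears --- and then extract the $s$-Bessel law from the joint distribution of fixed-point counts and their roots-of-unity decorations. This is a concrete but slightly delicate probabilistic computation; everything else is either the trivial identity $\chi_t(PG)=\chi_t(G)\chi_t(G)^*$ or a citation to the known character laws of $U_n$ and to the $s$-Bessel calculus in \cite{bb+}.
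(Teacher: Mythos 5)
Your proposal follows essentially the same route as the paper: reduce to the projective versions via $PO_n^*=PU_n$ and $PH_n^{(s)}=PH_n^s$, using the identity $\chi_t(PG)=\chi_t(G)\chi_t(G)^*$, and then invoke the classical asymptotic character laws (complex Gaussian for $U_n$, $s$-Bessel for $H_n^s$, the latter from \cite{bb+}). The paper's own proof is exactly this reduction, stated in two sentences.

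There is one small but genuine omission. The identity $\chi_t(PG)=\chi_t(G)^2$ controls only the \emph{even} moments of $\chi_t(G)$, i.e.\ it determines the law of the positive part $\sqrt{\chi_t(G)^2}$ but not the law of the self-adjoint variable $\chi_t(G)$ itself; your phrase ``computing the former yields the latter'' glosses over this. The paper's first (and only additional) step is precisely to dispose of the odd moments: by the Weingarten formula of Theorem 7.2, $\int_G\chi_t^k$ is a sum indexed by pairs of partitions in $D_k=E_o(0,k)$ (resp.\ $E_h^s(0,k)$), and these sets are empty for $k$ odd, since every partition in $E_o$ is a pairing and every partition in $E_h^s$ has an even total number of legs by condition (1) of Theorem 3.3. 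Adding this one line makes your argument complete and brings it into exact agreement with the paper's proof. Your more detailed unpacking of the classical computations for $U_n$ and $H_n^s=\mathbb Z_s\wr S_n$ (fixed points asymptotically Poisson, with i.i.d.\ uniform $\mathbb Z_s$-decorations) is a correct expansion of what the paper simply cites from \cite{bb+}.
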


\begin{proof}
The Weingarten formula shows that the odd moments of the variables in the statement are all 0, so all computations actually take place over the projective versions. With this remark in hand, the results simply follow from the well-known fact that $\chi_t$ is asymptotically complex Gaussian for $U_n$, and $s$-Bessel for $H_n^s$. See \cite{bb+}.
\end{proof}

The squeezed $s$-Bessel laws seem to have a quite interesting combinatorics, but it is beyond the purposes of this paper to get into this subject. We would like however to present one such combinatorial statement, in the simplest case, $s=\infty$ and $t=1$.

\begin{proposition}
The asymptotic even moments of the character $\chi\in C(H_n^*)$ satisfy
$$c_k=\sum_{s=0}^{k-1}\begin{pmatrix}k\cr s\end{pmatrix}\begin{pmatrix}k-1\\ s\end{pmatrix}c_s$$
and are equal to the number of games of simple patience with $n$ cards.
\end{proposition}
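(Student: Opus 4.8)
\emph{Proof proposal.} The plan is to bypass the Weingarten formula for $H_n^*$ and compute instead over the classical group $K_n=\mathbb T\wr S_n$, using the projective-version results already established. By Theorem 7.8, applied at $s=\infty$ and $t=1$, the asymptotic law of $\chi$ for $H_n^*=H_n^{(\infty)}$ is the squeezed $\infty$-Bessel law of parameter $1$; in particular the odd asymptotic moments vanish, and exactly as in the proof of Theorem 7.8 the even ones are computed over the projective version, which by Theorem 2.6 is $PH_n^*=PK_n$. Concretely this gives $c_k=\lim_{n\to\infty}\int_{K_n}|\chi|^{2k}\,du$.

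The next step is to evaluate $\int_{K_n}|\chi|^{2k}$. An element of $K_n$ is a pair $(\sigma,(t_1,\ldots,t_n))$ with $\sigma\in S_n$ and $t_i\in\mathbb T$, acting by the monomial matrix with $t_i$ in position $(i,\sigma(i))$, so that $\chi=\mathrm{Tr}(u)=\sum_{i\in\mathrm{Fix}(\sigma)}t_i$. Integrating against the Haar measure of $K_n$, which is uniform on $S_n$ times the Haar measure of $\mathbb T^n$, and using $\int_{\mathbb T}t^a\bar t^b\,dt=\delta_{ab}$, one obtains
$$\int_{K_n}|\chi|^{2k}\,du=\frac1{n!}\sum_{\sigma\in S_n}N\big(\mathrm{fix}(\sigma),k\big),\qquad N(f,k)=\sum_{a_1+\cdots+a_f=k}\binom{k}{a_1,\ldots,a_f}^{2},$$
so $N(f,k)$ is the number of pairs of length-$k$ words over an alphabet of size $f$ which are rearrangements of one another. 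Since $N(\cdot,k)$ is a polynomial of degree $k$ and the number of fixed points of a uniformly random element of $S_n$ converges, with all its moments, to a Poisson$(1)$ variable, we get $c_k=e^{-1}\sum_{f\ge 0}N(f,k)/f!$.

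It then remains to turn this into the asserted recursion. Writing $N(f,k)=(k!)^2[x^k]\,I(x)^f$ with $I(x)=\sum_{a\ge 0}x^a/(a!)^2$ (a modified Bessel function), and summing $\sum_{f\ge 0}I(x)^f/f!=e^{I(x)}$, one gets the generating identity
$$E(x):=\sum_{k\ge 0}\frac{c_k}{(k!)^2}\,x^k=\exp\Big(I(x)-1\Big)=\exp\Big(\sum_{a\ge 1}\frac{x^a}{(a!)^2}\Big).$$
Taking the logarithmic derivative gives $E'(x)=E(x)\sum_{a\ge 1}x^{a-1}/\big(a!(a-1)!\big)$, and comparing the coefficients of $x^{k-1}$ on both sides, then clearing the factorials, yields precisely
$$c_k=\sum_{s=0}^{k-1}\binom ks\binom{k-1}s c_s.$$
Finally, this recursion — equivalently the Bessel-type generating function for $E$ above — is exactly the known recursion for the number of games of simple patience, which establishes the last assertion.

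The step I expect to require the most care is the first one: one has to be precise about the fact that ``$s=\infty$'' in Theorem 7.8 genuinely produces the wreath product $\mathbb T\wr S_n$ (through $PH_n^*=PK_n$), and that the ``squeezing'' only kills odd moments, so that the even moments of $\chi$ over $H_n^*$ coincide with $\int_{K_n}|\chi|^{2k}$ in the limit. Everything after that is bookkeeping with multinomial coefficients and formal power series, together with the invocation of the known combinatorial identity for patience games.
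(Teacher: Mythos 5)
Your argument is correct, but it is not the proof the paper gives: the paper opens by remarking that the statement ``follows from Theorem 7.6'' and then deliberately presents a different, purely combinatorial argument, whereas you have carried out in full the derivation the paper only alludes to. Concretely, the paper identifies $c_k=\#E_h(2k)$ with the number of ``balanced'' partitions of $k$ upper and $k$ lower points, and obtains the recursion by a first-block decomposition: the block containing the first upper point is built by choosing $r-1$ further upper legs out of $k-1$ and $r$ lower legs out of $k$, and the remaining $k-r$ points carry an arbitrary balanced partition, giving $c_k=\sum_r\binom{k-1}{r-1}\binom{k}{r}c_{k-r}$. You instead pass through $PH_n^*=PK_n$ (Theorem 2.6 --- note your reference to ``Theorem 7.8'' should be to Theorem 7.6), reduce to the exact identity $\int_{H_n^*}\chi^{2k}=\int_{K_n}|\chi|^{2k}$ with $K_n=\mathbb T\wr S_n$, and evaluate the limit via the Poisson($1$) behaviour of fixed points and the exponential generating identity $E(x)=\exp(I(x)-1)$; the recursion then falls out of the logarithmic derivative. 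Your computations check out (in particular the coefficient extraction from $E'=E\cdot I'$ does yield $c_k=\sum_{s=0}^{k-1}\binom{k}{s}\binom{k-1}{s}c_s$ after clearing factorials). What each approach buys: the paper's count is elementary, makes the bijective content of the recursion transparent, and needs no limit theorem; your route exhibits the Bessel-type generating function $\exp(\sum_{a\ge1}x^a/(a!)^2)$ explicitly, which is closer to the ``squeezed Bessel'' picture of Theorem 7.6 and to the patience-sorting literature, at the cost of relying on the projective-version identification and the moment convergence of $\mathrm{fix}(\sigma)$.
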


\begin{proof}
This follows from Theorem 7.6, but we will present below a direct proof, that we found at an early stage of this work. According to the general theory, the numbers in the statement are given by $c_k=\# E_h(2k)$, i.e. they count the partitions of $\{1,\ldots,2k\}$ having the property that each block has the same number of odd and even legs. 

It is convenient to do the following manipulation: we keep the sequence of odd legs fixed, and we pull downwards the sequence of even legs. In this way, $E_h(2k)$ becomes the set of partitions between an upper and a lower sequence of $k$ points, such that each block is ``balanced'', in the sense that it has the same number of upper and lower legs. 

Now observe that these partitions can be obtained as follows: (a) pick a number $r\in\{1,\ldots,n\}$, (b) connect the first point on the upper line to some $r-1$ other points on the upper line, (c) choose $r$ points on the lower line, and connect them to the already connected upper $r$ points, (d) connect the remaining $k-r$ upper points to the remaining $k-r$ lower points, by means of a balanced partition. 

With $s=k-r$ this gives the formula in the statement. As for the patience game interpretation, see Aldous and Diaconis \cite{adi} and Sloane's comments in \cite{slo} regarding the sequence A023998, which is the sequence of moments of $\chi$.
\end{proof}

Finally, let us mention that for the higher hyperoctahedral quantum group $H_n^{[s]}$, our standard methods simply don't work. We don't know if this quantum group produces or not the squeezed version of some ``known'' semigroup.

\section{Concluding remarks}

We have seen in this paper that the easy quantum groups consist in principle of 6 groups, their free versions, 2 half-liberations, and one infinite series, still waiting to be constructed. The construction of this hypothetical multi-parameter ``hyperoctahedral series'', and the continuation and finishing of our classification work, are of course the main two questions that we would like to address here.

The situation here, which appears to be unexpectedly complex, reminds a bit the algebraic difficulty and subtlety of the usual complex reflection groups \cite{bmr}.

At the level of applications now, as explained in the introduction, we intend to use the easy quantum group list that we have so far as an ``input'' for a number of representation theory and probability considerations, based on our belief that ``any result which holds for $S_n,O_n$ should have a suitable extension to all easy quantum groups''. 

In addition, in the non-easy case, there are of course of big number of results, classical or even free, having something to do with ``diagrams'' and with the easy quantum group technology in general, and that might fall one day into an extension of our formalism.

Here is a list of topics, waiting to be developed:

\begin{enumerate}
\item De Finetti theorems. These are available for $S_n,O_n$ from the book \cite{kal}, for $S_n^+$ from \cite{ksp} then \cite{cu1}, and for $O_n^+$ from \cite{cu2}. We intend to develop a global approach to the problem, by using easy quantum groups, in our forthcoming paper \cite{ez2}.

\item Eigenvalue computations. The key results of Diaconis and Shahshahani in \cite{dsh} concerning $S_n,O_n$ can be obtained as well by using  Weingarten functions and cumulants, and an extension to all easy quantum groups is in preparation \cite{ez3}.

\item Invariant theory. The groups $S_n,O_n$ and their versions $S_n^+,O_n^+,O_n^*$ have served as a guiding example for the study of many invariants, see \cite{bco}, \cite{bv1}, \cite{bv2}, \cite{cma}, \cite{csn}, \cite{nov}. Some of these results are expected to extend to all easy quantum groups.

\item Geometric aspects. The groups $S_n,O_n$ and their free versions $S_n^+,O_n^+$ were involved as well in many other ``classical vs. free'' considerations. Let us mention here the Poisson boundary results in \cite{vve}, and the quantum isometry groups in \cite{bgo}. Once again, the easy quantum groups can lead to some new results here.

\item Generalizations. One interesting question would be to understand the twisting and deformation of the easy quantum groups, say with the objective of extending our formalism to the $S^2\neq id$ case, via monoidal equivalence \cite{bdv}. Another question is whether the half-liberation operation can be applied to locally compact real algebraic groups $G\subset M_n(\mathbb R)$, as to fit into the general axioms in \cite{kva}.
\end{enumerate}

In addition to these questions, one basic problem is to classify the intermediate quantum groups $K\subset G\subset K^+$, where $K$ is a fixed easy group. This looks like a quite difficult question. However, a ray of light comes from a conjecture in \cite{qpg}, stating that there is no intermediate quantum group $S_n\subset G\subset S_n^+$. This is actually a quite subtle question, whose study seems to lead straight into the core of the ``non-easy'' problematics.

\end{document}